\newcommand{\N}{\mathbb{N}}
\newcommand{\R}{\mathbb{R}}
\newcommand{\Z}{\mathbb{Z}}
\newcommand{\E}{\mathbb{E}}
\newcommand{\set}[1]{\left\{#1\right\}}
\newcommand{\supp}{\mathop{\mathrm{supp}}\nolimits}
\renewcommand{\P}{\mathbb{P}}
\newcommand{\A}{\mathcal{A}}
\newtheorem{theorem}{Theorem}
\newtheorem*{theorem*}{Theorem}
\newtheorem{lemma}{Lemma}
\newtheorem{proposition}{Proposition}
\newtheorem*{proposition*}{Proposition}
\newtheorem{corollary}{Corollary}
\theoremstyle{definition}
\newtheorem{remark}{Remark}
\numberwithin{equation}{section}
\begin{document}

\title[Quenched tail estimate for RWRS]
{
Quenched tail estimate for the random walk in random scenery
and in random layered conductance}
\author{Jean-Dominique Deuschel}
\address[Jean-Dominique Deuschel]
{Institut f\"ur Mathematik, Technische Universit\"at Berlin, Berlin, Germany
and Research Institute in Mathematical Sciences, 
Kyoto University, Kyoto, Japan}
\email{deuschel@math.tu-berlin.de}
\author{Ryoki Fukushima}
\address[Ryoki Fukushima]
{Research Institute in Mathematical Sciences, 
Kyoto University, Kyoto, Japan}
\email{ryoki@kurims.kyoto-u.ac.jp}
\date{\today}
\keywords{random walk, random scenery, tail estimate, moderate deviation,
large deviation, random conductance model, layered media.}
\subjclass[2010]{Primary 60F10; secondary 60J55; 60K37}

\begin{abstract}
We discuss the quenched tail estimates for the random walk in random scenery. 
The random walk is the symmetric nearest neighbor walk and the 
random scenery is assumed to be independent and identically distributed, 
non-negative, and has a power law tail. We identify the long time aymptotics 
of the upper deviation probability of the random walk in quenched random 
scenery, depending on the tail of scenery distribution  and the amount of 
the deviation. 
The result is in turn applied to the tail estimates for a random walk in 
random conductance which has a layered structure. 
\end{abstract}

\maketitle

\section{Introduction}
In this article, we study the random walk in random scenery, mainly in the
continuous time setting. 
This model is first introduced and studied in the discrete time setting by 
Borodin~\cite{Bor79a,Bor79b} and Kesten--Spitzer~\cite{KS79} independently. 
It is the sum of independent and identically distributed random variables 
$(\{z(x)\}_{x\in\Z^d},\P)$ along a random walk $((S_n)_{n\in\Z_+},P_0)$ 
starting at the origin:
\begin{equation}
W_n=\sum_{k=1}^n z(S_k).
\end{equation}
{One of the motivation in~\cite{KS79}} was to construct a
new class of self-similar processes as a scaling limit of this process 
under the joint law. {They proved that, when $d=1$ and $z$ and $S$ are 
centered and belong
to the domain of attraction of stable law with index $\alpha\in(0,2]$ and
$\beta\in (1,2]$ respectively, the rescaled process
\begin{equation}
 \left(n^{-(1-\frac{1}{\beta}+\frac{1}{\alpha\beta})}W_{\lceil nt \rceil}
\right)_{t\ge 0}\label{KS-scaling}
\end{equation}
converges in distribution under $\P\otimes P_0$.}
Subsequently, a lot of works
have been done to extend the distributional limit theorems, 
not only under the joint law but also for the quenched scenery, namely for
almost all realizations of the scenery, and to obtain law of iterated 
logarithm type results. 
We refer the reader to the introduction of~\cite{GPPdS14} for more 
details and background. 

In this paper, we discuss upper tail estimates with quenched scenery 
which is assumed to be non-negative and have a power law tail.
The tail estimate for the random walk in random scenery is studied rather
recently. As for the annealed tail estimates, that is, under the joint law, 
there are extensive results. 
In the case of Gaussian scenery, for the Brownian 
motion~\cite{CP01} and for the Markov chains satisfying the level-2 full 
large deviation principle~\cite{GP02} instead of the random walk, the
 large deviation principles for $t^{-\frac32}W_t$ and $n^{-\frac32}W_n$ are 
proved. Later 
the moderate deviations are also studied for the Brownian 
motion in $d\le 3$~\cite{Cas04}. The paper~\cite{KL98} also includes an 
upper tail estimate for the stable process in Brownian scenery, which is a 
continuous space counterpart. 
The bounded scenery case is studied in~\cite{Rem00,AC03b} for the
Brownian motion and the large deviation 
principle for $t^{-1}W_t$ is established. 
More recently, the random walk in a random scenery with stretched or 
compressed exponential tail attracted much attention, partially in 
relation to the recent development on the tail estimates for the 
self-intersection local time~\cite{Che10,Kon10}. 
There are various results depending on the tail and the spatial 
dimensions~\cite{AC07b,AC07a,GKS07,GHK06}.
Later, under the Cram\'er condition for the upper tail 
and the finiteness of third moment,~\cite{FMW08} proved quite precise moderate
deviation estimates. Also in one dimensional case, the moderate deviations
are studied for stable random walk in a scenery with sub-Gaussian 
tail~\cite{Li12}.

Concerning the quenched tail estimates, there are not many results. In the
case of one dimensional Brownian motion in Gaussian scenery~\cite{AC03a} 
and bounded scenery~\cite{AC03b}, the large deviation principles are 
proved for $t^{-1}(\log t)^{-\frac12}W_t$ and $t^{-1}W_t$ respectively. 
And again~\cite{Cas04} extends the result to the moderate deviations 
for the Brownian motion in Gaussian scenery for $d\le 3$. For more on the 
technical correspondence, see Remark~\ref{rem:AC} below.
One may also consider the tail behavior under the scenery law with the 
random walk trajectory fixed. 
In this case the large deviation principle is proved in~\cite{GP02}. 

The random walk in random scenery is naturally interpreted as a 
diffusing particle in a layered random media. See for example the 
introduction of~\cite{AC03a} where it is studied under the name 
``diffusion in a random Gaussian shear flow drift''. 
In this paper, we study another model --- the random walk in a random
layered conductance introduced and studied in~\cite{ADS16}. 
Here again we assume that the random conductance has a power law tail.
See Section~\ref{sect:RCM} below for the precise setting.
This process can be described as the time change
of the simple random walk with the clock process given by the random
walk in random scenery. By using the tail estimates for the latter, 
we derive asymptotics of the super-diffusive deposition probabilities. 

\section{Setting and results}
Throughout the paper, $(\{z(x)\}_{x\in\Z^d},\P)$ is a family 
of non-negative, independent and  identically distributed random variables 
with the distribution satisfying
\begin{equation}
 \P(z(x)> r)= r^{-\alpha+o(1)} \textrm{ as }r\to\infty
\label{ass-tail}
\end{equation}
for some $\alpha>0$.
\begin{remark}
The non-negativity could be replaced by some assumption on the lower tail.
For instance, $z$ being bounded from below would suffice for the results in
Section~\ref{sect:RWRS}. We work with non-negative scenery with an application
to the random conductance model (Section~\ref{sect:RCM}) in mind.
\end{remark}
\subsection{Random walk in random scenery}
\label{sect:RWRS}
Let $(\{S_t\}_{t\ge 0},P_x)$ be the continuous time simple random walk
on $\Z^d$ starting at $x\in\Z^d$. 
We consider the additive functional defined by
\begin{equation}
A_t= \int_0^t z(S_u){\rm d} u.
\end{equation}
This is a natural continuous time analogue of the random walk in 
random scenery. 
\begin{theorem}
 \label{RWRS}
Let $\rho>0$. Then $\P$-almost surely, 
\begin{equation}
P_0\left(A_t \ge t^{\rho}\right)
= \exp\left\{-t^{p(\alpha,\rho)+o(1)}\right\}
\label{rwrs}
\end{equation}
as $t\to\infty$, where 
\begin{equation}
 p(\alpha,\rho)=
\begin{cases}
\frac{2\alpha\rho}{\alpha+1}-1,
 &\rho\in\left(\frac{\alpha+1}{2\alpha}\vee 1,
\frac{\alpha+1}{\alpha}\right],\\[5pt]
\alpha(\rho-1),&\rho>\frac{\alpha+1}{\alpha}
\end{cases}
\label{def-p}
\end{equation} 
for $d=1$ and 
\begin{equation}
 p(\alpha,\rho)=
\begin{cases}
\frac{2\alpha\rho-d}{2\alpha+d},
 &\rho\in\left(\frac{d}{2\alpha}\vee 1,
\frac{\alpha+d}{\alpha}\right],\\[5pt]
\frac{\alpha(\rho-1)}{d},&\rho>\frac{\alpha+d}{\alpha}
\end{cases}
\end{equation} 
for $d\ge 2$. 
Furthermore, when $d=1$ and $\rho< \frac{\alpha+1}{2\alpha}\vee 1$
or $d\ge 2$ and $\rho< \frac{d}{2\alpha}\vee 1$, 
$\P$-almost surely the above probability is bounded from 
below by a negative power of $t$. 
\end{theorem}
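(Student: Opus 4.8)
The plan is to prove matching lower and upper bounds on $P_0(A_t\ge t^{\rho})$ and to treat the sub-threshold statement separately; throughout, the $o(1)$ in the exponent of \eqref{rwrs} absorbs polynomial and $\log t$ factors as well as the $o(1)$ in \eqref{ass-tail}, so only scenery estimates valid $\P$-a.s.\ for all large $t$ --- obtained by Borel--Cantelli over dyadic scales together with interpolation in $t$ --- are needed. Two elementary random-walk inputs drive everything. First, displacement: writing $M_t=\max_{u\le t}|S_u|$, for $|x|=R$ one has $P_0(S_{t/2}=x)\ge\exp\{-CR^{2}/t\}$ and $P_0(M_t\ge R)\le\exp\{-cR^{2}/t\}$ when $\sqrt t\le R\le t$, and $P_0(\tau_x\le t)\ge\exp\{-CR\log(R/t)\}$, $P_0(M_t\ge R)\le\exp\{-cR\log(R/t)\}$ when $R\ge t$. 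Second, the cost of accumulating local time $s$ at a fixed site (all up to polynomial factors): it is $\exp\{-cs\}$ for $d\ge2$, realised by simply holding at the site, but only $\exp\{-cs^{2}/t\}$ for $d=1$ and $s\ge\sqrt t$, realised by confining the walk to an interval of length $\asymp t/s$ about the site, whose principal Dirichlet eigenvalue is $\asymp(s/t)^{2}$, so that the walk revisits the centre a near-ballistic fraction of the time (and, at a site at distance $R\ge\sqrt t$, acquiring local time $s$ costs $\exp\{-c(R^{2}/t+s)\}$ for $d\ge2$). This $d=1$ versus $d\ge2$ dichotomy is exactly what produces the two formulas for $p(\alpha,\rho)$.

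\smallskip
\emph{Lower bound.} By Borel--Cantelli applied to \eqref{ass-tail}, $\P$-a.s.\ for all large $t$ the nearest site $x^{*}$ with $z(x^{*})\ge h$ satisfies $|x^{*}|\le h^{\alpha/d+o(1)}=:R$ whenever $h\le t^{d/\alpha}$. Given such $h$ and a duration $s\le t$, force the walk to be at $x^{*}$ at time $t/2$ and to acquire local time $\ge s$ at $x^{*}$ during $[t/2,t]$; by the inputs above this event has $P_0$-probability at least $\exp\{-C(R^{2}/t+s)-t^{o(1)}\}$ for $d\ge2$ and $\exp\{-C(R^{2}/t+s^{2}/t)-t^{o(1)}\}$ for $d=1$, while on it $A_t\ge sh$. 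Optimise subject to $sh\ge t^{\rho}$, $s\le t$. In the first regime the constraint $s\le t$ is slack: for $d\ge2$ minimise $R^{2}/t+s=h^{2\alpha/d}/t+t^{\rho}/h$, balanced at $h=t^{d(\rho+1)/(2\alpha+d)}$ with both terms of order $t^{(2\alpha\rho-d)/(2\alpha+d)}$; for $d=1$ minimise $h^{2\alpha}/t+t^{2\rho-1}/h^{2}$, balanced at $h=t^{\rho/(\alpha+1)}$ with both terms of order $t^{2\alpha\rho/(\alpha+1)-1}$. One checks that at these optima $R=h^{\alpha/d}\in(\sqrt t,t)$ --- precisely the range of $\rho$ in the first regime --- and that $sh=t^{\rho}$, so the cost exponent equals $p(\alpha,\rho)$. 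In the second regime the constraint binds, $s\asymp t$, hence $h\asymp t^{\rho-1}$ and $R\asymp t^{\alpha(\rho-1)/d}\ge t$, and the travel term dominates, giving $\exp\{-t^{\alpha(\rho-1)/d+o(1)}\}$ (for $d=1$, $\exp\{-t^{\alpha(\rho-1)+o(1)}\}$), again $p(\alpha,\rho)$.

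\smallskip
\emph{Upper bound.} Split $P_0(A_t\ge t^{\rho})\le\sum_{K\text{ dyadic}}P_0(A_t\ge t^{\rho},\,M_t\in[K,2K])$. On $\{M_t\le2K\}$ one has $A_t=\sum_{x\in B(0,2K)}\ell_t(x)z(x)$ and, $\P$-a.s., $\max_{B(0,2K)}z\le(2K)^{d/\alpha+o(1)}=:H_K$, so only $K\gtrsim t^{\alpha(\rho-1)/d-o(1)}$ contribute (else $A_t\le tH_K<t^{\rho}$). A dyadic splitting of the scenery, the layer-cake identity, and a pigeonhole over the $O(\log t)$ relevant levels reduce matters to bounding, for each dyadic $h\in[t^{\rho-1},H_K]$,
\begin{equation*}
P_0\!\left(\int_{0}^{t}\mathbf 1\{S_u\in G_h\}\,\d u\ \ge\ \tau:=t^{\rho}h^{-1}t^{-o(1)},\ M_t\le2K\right),\qquad G_h:=\{x\in B(0,2K):z(x)\ge h\},
\end{equation*}
where a further Borel--Cantelli argument makes $G_h$, $\P$-a.s.\ and uniformly in the relevant $(h,K)$, ``$\delta_h$-regular'' with $\delta_h=h^{\alpha/d}$: any ball of radius $r$ meets it in at most $t^{o(1)}(1\vee r/\delta_h)^{d}$ points. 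One bounds this probability by decomposing over the number $m$ of distinct $G_h$-sites visited and the largest per-site local time: visiting $m$ of a $\delta_h$-regular set forces $M_t\gtrsim(m\,t^{-o(1)})^{1/d}\delta_h$ and $m\delta_h\le t$, while producing total occupation $\tau$ on at most $m$ prescribed sites costs, via the local-time inputs, roughly $\exp\{-c\tau\}$ for $d\ge2$ and $\exp\{-c\tau^{2}/t\}$ for $d=1$ beyond its typical value $\asymp t\delta_h^{-d}$ (up to $t^{o(1)}$ factors for the choice of sites); when $\alpha<1$ the top level set $G_{H_K}$ is a single site and this reduces to a union bound over $B(0,2K)$ against the local-time tail. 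Minimising over $m$, then over $h\le H_K$ and $K$, and summing the $O((\log t)^{2})$ terms with the exit terms $P_0(M_t\ge K)$, yields $\exp\{-t^{p(\alpha,\rho)-o(1)}\}$, the critical pair being $h=t^{d(\rho+1)/(2\alpha+d)}$ (resp.\ $t^{\rho/(\alpha+1)}$), $K\asymp h^{\alpha/d}$ in the first regime and $h\asymp t^{\rho-1}$, $K\asymp t^{\alpha(\rho-1)/d}$ in the second --- matching the lower bound. I expect this last estimate --- pinning down sharply and uniformly in the scenery the trade-off between travelling among the sparse high-scenery sites and dwelling on them, with the correct recurrence-versus-transience dependence on $d$, and combining the displacement cost with the local-time cost at every scale $K$ (in part of the first regime the relevant peak is reached at no displacement cost, so the whole decay rate must come from the local-time deviation) --- to be the main technical obstacle.

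\smallskip
\emph{Sub-threshold regime.} Here the probability does not decay. If $\rho<1$, a second-moment bound gives $P_0\big(A_t\ge\tfrac12\P(z\ge1)\,t\big)\to1$, since $\sum_x\ell_t(x)\mathbf 1\{z(x)\ge1\}$ has $P_0$-mean $\P(z\ge1)\,t$ and $P_0$-variance $\lesssim t\max_x\ell_t(x)$, which is $\P$-a.s.\ $o(t^{2})$. If $1\le\rho<\tfrac{d}{2\alpha}\vee1$ --- which forces $\alpha<d/2$, and for $d=1$ also $\alpha<1$ --- two complementary mechanisms, matching the two threshold formulas, apply. For $d\ge2$, as then $\rho\alpha/d<1/2$, Borel--Cantelli gives $\P$-a.s.\ a site $x^{*}$ with $|x^{*}|\le t^{\rho\alpha/d+o(1)}\le\sqrt t$ and $z(x^{*})\ge t^{\rho}$; the walk visits $x^{*}$ --- hence $A_t\ge z(x^{*})\ge t^{\rho}$ --- with probability at least a negative power of $t$. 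For $d=1$, the largest scenery value in $B(0,\sqrt t)$ is $\P$-a.s.\ of order $(\sqrt t)^{1/\alpha}=t^{1/(2\alpha)}$ and, its location being uniform over the range, carries local time $\asymp\sqrt t$ with probability bounded away from $0$, so $A_t\gtrsim t^{1/(2\alpha)}\cdot\sqrt t=t^{(\alpha+1)/(2\alpha)-o(1)}\ge t^{\rho}$ with positive probability. In every case $P_0(A_t\ge t^{\rho})$ is bounded below by a negative power of $t$.
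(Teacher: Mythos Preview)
Your lower bounds are essentially the paper's: go to the nearest high peak, leave the right local time there, optimise the peak height. The paper phrases the $d=1$ local-time cost via Chen's moderate-deviation lower bound rather than a confinement/eigenvalue picture, but these are equivalent at the level of exponents.

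The upper bound, however, has a genuine gap --- precisely the one you flag as ``the main technical obstacle''. Your claim that accumulating occupation $\tau$ on at most $m$ prescribed $G_h$-sites costs $\exp\{-c\tau^2/t\}$ in $d=1$ does \emph{not} follow from the single-site input $P_0(\ell_t(x)\ge s)\le\exp\{-cs^2/t\}$ together with a decomposition over $m$. Pigeonholing only gives that some site carries $\ell\ge\tau/m$, and combining this with the displacement cost $\exp\{-c(m\delta_h)^2/t\}$ and optimising over $m$ yields merely $\exp\{-c\tau\delta_h/t\}$. At the low levels $h\asymp t^{\rho-1}$ (where $\tau\asymp t$) this gives exponent $\alpha(\rho-1)$, which for $\alpha>1$ in the first regime is strictly \emph{smaller} than $p(\alpha,\rho)=\tfrac{2\alpha\rho}{\alpha+1}-1$; the resulting upper bound is too weak. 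The correct bound $\exp\{-c\tau^2/t\}$ is a statement about the occupation time of the \emph{whole} sparse set $G_h$ and cannot be recovered site by site. The paper obtains it by applying Chen's moderate-deviation upper bound to $f_t=\mathbf 1_{G_h}$: the crucial input is that $\sup_x E_x[\ell_s(G_h)]\le s^{1/2}t^{o(1)}$ once $s<\delta_h^{2}$ (respectively $\le t^{o(1)}$ once $s<\delta_h^{d}$ for $d\ge2$), and this quenched control of the \emph{expected} occupation time is established by a concentration argument (McDiarmid's inequality on the trimmed level sets). That concentration lemma is the missing piece in your outline; without it, neither the $\exp\{-c\tau^2/t\}$ nor the $\exp\{-c\tau\}$ bound is available uniformly over all levels.

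A smaller issue: in your sub-threshold argument for $\rho<1$ the roles of $\P$ and $P_0$ are swapped. The mean $\P(z\ge1)\,t$ and the variance bound $\le t\max_x\ell_t(x)$ are the $\P$-mean and $\P$-variance for a \emph{fixed} walk trajectory, not the $P_0$-moments for fixed scenery. This yields a trajectory-quenched rather than scenery-quenched statement; one can rescue it via a Fubini/Borel--Cantelli argument, but the paper instead invokes the $\P\otimes P_0$-almost sure law of large numbers $A_t/t\to\E[z(0)]$ (Kakutani's ergodic theorem) when $\alpha>1$, which gives the scenery-quenched conclusion directly.
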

{Figure~\ref{p} shows the phase diagram of $p(\alpha,\rho)$. 
The graph of $p(\alpha,\rho)$ in the same case can 
be found in Figures~\ref{a<1} and~\ref{a>1} in subsection 5.1.}
\begin{figure}[h]
 \begin{picture}(300,220)(-55,-13)
 \put(-10,-10){$0$}
 \put(0,0){\vector(1,0){180}}
 \put(0,0){\vector(0,1){180}}
 \put(185,0){$\alpha$}
 \put(0,185){$\rho$}
 \put(45,-13){{$1\vee \frac{d}{2}$}}
 \dashline{2}(0,50)(50,50)
 \put(-13,45){$1$}
 \dashline{2}(50,0)(50,50)
 \qbezier(50,50)(20,60)(10,170)
 \qbezier(20,170)(60,70)(180,60)
 \put(50,50){\line(1,0){130}}
 \put(30,20){$0$}
 \put(40,75){{$(\frac{2\alpha\rho}{\alpha+1}-1)1_{d=1}$}}
 \put(50,57){{$+\frac{2\alpha\rho-d}{2\alpha+d}1_{d\ge 2}$}}
 \put(90,120){{$\frac{\alpha(\rho-1)}{d}$}}
 \end{picture}
\caption{The phase diagram of the exponent $p(\alpha,\rho)$. 
The curves are $\rho={(\frac{\alpha+1}{2\alpha}1_{d=1}+\frac{d}{2\alpha}1_{d\ge 2})}\vee 1$ and 
$\rho=\frac{\alpha+{d}}{\alpha}$.}
\label{p} 
\end{figure}
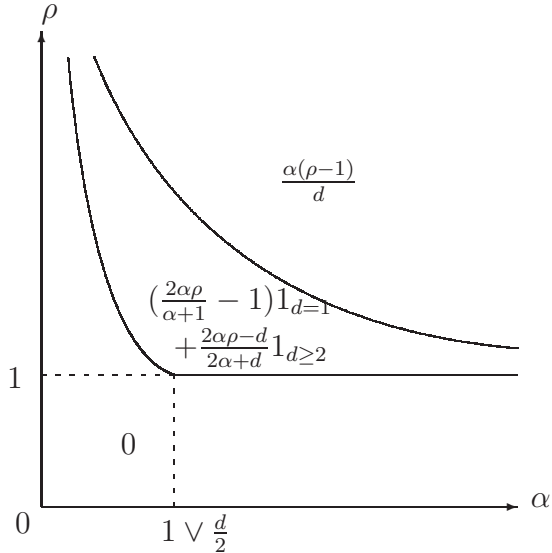
\begin{remark}
We can prove the corresponding results for the discrete time random
walk in random scenery by the same argument. The exponent in the first regime
is the same but in the second regime it would be $\infty$. This is because
the random walk needs to travel much further distance 
than $t$ to achieve $A_t\ge t^\rho$ in the second regime. 
See Section~\ref{spec} below.
\end{remark}
\begin{corollary}
Let $\alpha\le 1$. Then 
{$\P\otimes P_0$}-almost surely, 
\begin{equation}
 \limsup_{t\to\infty}\frac{\log A_t}{\log t}\le
\begin{cases}
 \frac{\alpha+1}{2\alpha},&d=1,\\[5pt]
 \frac{d}{2\alpha},&d\ge 2.
\end{cases}
\label{upper-rate}
\end{equation}
\end{corollary}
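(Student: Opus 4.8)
The plan is to derive the corollary from Theorem~\ref{RWRS} by a Borel--Cantelli argument along a discretised time sequence. The point to notice first is that for $\alpha\le1$ one has $\frac{\alpha+1}{2\alpha}\ge1$ and $\frac d{2\alpha}\ge\frac d2\ge1$, so that the threshold appearing in~\eqref{upper-rate} is exactly the lower endpoint
\begin{equation}
\rho_\star:=\Bigl(\tfrac{\alpha+1}{2\alpha}1_{d=1}+\tfrac{d}{2\alpha}1_{d\ge2}\Bigr)\vee1
\end{equation}
of the first regime in Theorem~\ref{RWRS}, and that $p(\alpha,\rho)>0$ for every $\rho>\rho_\star$: indeed $p(\alpha,\rho_\star)=0$ and $\rho\mapsto p(\alpha,\rho)$ is increasing throughout the first regime, while in the second regime $p$ equals the still larger quantity $\alpha(\rho-1)1_{d=1}+\frac{\alpha(\rho-1)}{d}1_{d\ge2}>0$. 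Hence it will be enough to show that for each fixed $\rho>\rho_\star$ one has $A_t<t^\rho$ for all large $t$, $\P\otimes P_0$-almost surely, and then to intersect over a sequence $\rho_k\downarrow\rho_\star$ to obtain~\eqref{upper-rate}.

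For a fixed such $\rho$ I would pick $\rho'\in(\rho_\star,\rho)$, so that $\delta:=p(\alpha,\rho')>0$, and use the monotonicity of $t\mapsto A_t$ to discretise: since
\begin{equation}
\Bigl\{A_t\ge t^\rho\ \text{for some}\ t\in[n,n+1]\Bigr\}\subseteq\bigl\{A_{n+1}\ge n^\rho\bigr\}\subseteq\bigl\{A_{n+1}\ge(n+1)^{\rho'}\bigr\}
\end{equation}
for all large $n$ (the last inclusion because $\rho>\rho'$ forces $n^\rho\ge(n+1)^{\rho'}$ eventually), it suffices to know that $P_0$-a.s.\ only finitely many of the rightmost events occur. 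Theorem~\ref{RWRS} gives, for $\P$-a.e.\ scenery, some $n_0$ with $P_0\bigl(A_{n+1}\ge(n+1)^{\rho'}\bigr)\le\exp\{-(n+1)^{\delta/2}\}$ for $n\ge n_0$, whence $\sum_nP_0\bigl(A_{n+1}\ge(n+1)^{\rho'}\bigr)<\infty$; the first Borel--Cantelli lemma, applied under $P_0$ with the scenery frozen, then closes the fixed-$\rho$ step. Taking $\rho_k\downarrow\rho_\star$ and intersecting the resulting full-measure events yields $\limsup_{t\to\infty}\frac{\log A_t}{\log t}\le\rho_\star$.

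The computation is light, but the order of quantifiers is the one thing to handle with care: the conclusion of Theorem~\ref{RWRS} is an almost-sure statement in the scenery for each \emph{individual} exponent, with the $o(1)$ scenery-dependent, so I would fix the countable family $\{\rho_k\}$ first, take the intersection of the corresponding $\P$-full-measure events, and only then run Borel--Cantelli under $P_0$ for each scenery in that intersection; a routine Fubini/tower step then promotes the two almost-sure statements to a $\P\otimes P_0$-almost-sure one. The hard part, such as it is, lies entirely in this bookkeeping; the hypothesis $\alpha\le1$ is used precisely to place $\rho_\star$ at the left endpoint of the first regime, so that every admissible $\rho$ has $p(\alpha,\rho)>0$ and the sum above converges.
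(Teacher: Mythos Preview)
Your proposal is correct and is precisely the approach the paper has in mind: the paper's entire proof is the sentence ``This follows immediately from Theorem~\ref{RWRS} and the Borel--Cantelli Lemma,'' and you have faithfully supplied the omitted details (identification of $\rho_\star$ with the left endpoint of the first regime when $\alpha\le 1$, discretisation via monotonicity of $A_t$, summability from $p(\alpha,\rho')>0$, and the countable-intersection/Fubini bookkeeping for the quantifiers).
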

\begin{remark}
This follows immediately from Theorem~\ref{RWRS} and the Borel--Cantelli
Lemma and is, 
to the best of our knowledge, new. The exponent in the one dimensional
case coincides with the self-similar parameter identified in~\cite{KS79},
see~\eqref{KS-scaling}. When $d\ge 2$, the random walk visits a single 
site not too many times and hence $A_t$ should behave in a similar way 
to the sum of independent and identically distributed random variables, in 
which case the scaling exponent is $1/\alpha$. Based on this observation, 
we believe that \eqref{upper-rate} is not sharp for $d\ge 3$.
We focus on $\alpha\le 1$ since in the other case $\alpha>1$, 
one can prove much finer aymptotics 
\begin{equation}
\lim_{t\to\infty}\frac{1}{t}A_t=\E[z(0)]
 \label{LLN}
\end{equation}
{$\P\otimes P_0$}-almost surely, for example by using Kakutani's random 
ergodic theorem~\cite{Kak51} as pointed out in~\cite{dHS06}. 
In one-dimensional case with stronger 
moment conditions, even finer asymptotics, the law of iterated 
logarithm for $A_t-t\E[z(0)]$, is shown in~\cite{CKS99,Zha01}. 
\end{remark}
When $d=1$, $\alpha\le 1$ and $\rho=\frac{\alpha+1}{2\alpha}$, 
the bound~\eqref{rwrs} with $p(\alpha,\rho)=0$ holds by monotonicity. 
The same applies also for $d\ge 2$, $\alpha\le \frac{d}2$ and 
$\rho=\frac{d}{2\alpha}$.
On the other hand, when $d=1$, $\alpha>1$ and $\rho=1$ or 
$d\ge 2$, $\alpha>\frac{d}2$ and $\rho=1$, the tail of 
$P_0\left(A_t \ge ct\right)$ depends on $c$. 
This tends to one for $c<\E[z(0)]$ whereas for $c>\E[z(0)]$, 
this is the standard large deviation regime. We can get a lower bound 
by extrapolation and in fact it is the correct tail behavior. 
\begin{theorem}
 \label{LDP}
Let $d=1$ and $\alpha>1$ or $d\ge 2$ and $\alpha>\frac{d}{2}$. 
Then for any $c>\E[z(0)]$, $\P$-almost surely, 
\begin{equation}
P_0\left(A_t \ge ct\right)
= 
\begin{cases}
\exp\left\{-t^{\frac{\alpha-1}{\alpha+1}+o(1)}\right\},&d=1,\\[5pt]
\exp\left\{-t^{\frac{2\alpha-d}{2\alpha+d}+o(1)}\right\},&d\ge 2 
\end{cases}
\label{ldp}
\end{equation}
as $t\to\infty$.
\end{theorem}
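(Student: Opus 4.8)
The idea is to deduce the large deviation statement from the boundary case $\rho = \frac{\alpha+d}{\alpha}$ (equivalently the endpoint $\rho=1$ in the variable appearing in Theorem~\ref{RWRS}) together with a direct analysis of the event $\{A_t \ge ct\}$ that exploits the fact that, for $\alpha>1$ (resp.\ $\alpha>d/2$), the additive functional satisfies the law of large numbers $t^{-1}A_t \to \E[z(0)]$. I would first establish the \emph{lower bound}. Since $c>\E[z(0)]$, the deviation $A_t\ge ct$ must come from the walk spending an atypically long time near a site (or a cluster of sites) where the scenery is large. Concretely, fix a large constant $K$ and let $R_t$ be a box of side $\asymp t^{1/(2\alpha+d)}$ (resp.\ the appropriate one-dimensional scale $t^{1/(\alpha+1)}$); inside $R_t$ one finds, $\P$-almost surely, a site $x$ with $z(x)\ge c't$ for a suitable $c'$, by the tail assumption~\eqref{ass-tail} and Borel--Cantelli. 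The walk travels to $x$, which costs probability $\exp\{-t^{1/(2\alpha+d)+o(1)}\}$ (resp.\ $\exp\{-t^{1/(\alpha+1)+o(1)}\}$) by standard Gaussian-type hitting estimates, then stays at $x$ for a time of order one, accumulating $\gtrsim ct$; one must also check that the remaining random time $t-o(t)$ contributes at least $(\E[z(0)]-\varepsilon)(t-o(t))$, which holds with probability $\to 1$ by the LLN~\eqref{LLN} restricted away from $x$. Combining, $P_0(A_t\ge ct)\ge \exp\{-t^{\frac{\alpha-1}{\alpha+1}+o(1)}\}$ when $d=1$ and analogously for $d\ge 2$, since $\frac{1}{\alpha+1} = \frac{\alpha-1}{\alpha+1}$ is \emph{not} correct and the right exponent $\frac{\alpha-1}{\alpha+1}$ emerges precisely because the hitting cost scales like (distance)$^2$/time and the distance needed to find $z\approx ct$ scales like $t^{1/\alpha}$, giving cost $t^{2/\alpha}/t = t^{2/\alpha-1}$; optimizing the box size against the volume constraint then yields the stated exponent. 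I would carry out this optimization carefully, as the bookkeeping of the competing costs is where the precise exponent is pinned down.

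For the \emph{upper bound}, the plan is to decompose according to the range $\mathrm{Ran}(S)_t := \{S_u : u\le t\}$ of the walk up to time $t$. On the event that the range is contained in a box $Q$ of side $\ell$, the occupation measure $\mu_t(x) = \int_0^t \mathbf 1\{S_u = x\}\,\d u$ is supported on $Q$ and has total mass $t$, so $A_t = \sum_{x\in Q} z(x)\mu_t(x) \le t\,\max_{x\in Q} z(x)$. The tail assumption gives that $\P$-almost surely $\max_{x\in Q}z(x) \le \ell^{d/\alpha + o(1)}$ once $\ell$ is large, so for $A_t\ge ct$ to occur one needs $\ell^{d/\alpha+o(1)}\ge c$, i.e.\ the range must reach scale $\ell \gtrsim t^{\alpha/d}$ — wait, this is the wrong direction; rather, to realize $A_t\ge ct$ with $c$ merely slightly above the mean, one does \emph{not} need a large range but a moderately large single value, and the relevant event is that $\max_{x\in Q}z(x)$ is of order $t$ while the walk localizes there. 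The cleanest route is therefore: condition on the scenery, sum over the possible "heavy sites" $x$ with $z(x)$ comparable to $t$ (there are at most $t^{d/\alpha+o(1)}$ of them in the relevant box by the tail bound and a first-moment count), bound the contribution of each by the probability that the walk reaches $x$ and accumulates $ct$ there, and control the contribution of the light part of the scenery by the exponential Chebyshev / Donsker--Varadhan bound using $\alpha>d/2$ so that $z$ restricted to $\{z\le t^{1-\delta}\}$ has enough moments. Each heavy site at distance $r$ contributes $\lesssim \exp\{-r^2/t + o(\cdot)\}$ and the number of sites at distance $\asymp r$ is $r^{d-1}$, so the union bound over heavy sites is dominated by the smallest $r$, namely $r\asymp t^{1/\alpha}$ (the typical distance to the nearest site with $z\approx t$), giving the upper bound $\exp\{-t^{2/\alpha - 1 + o(1)}\}$ in $d=1$; matching the forms in~\eqref{ldp} requires the algebraic identity relating $2/\alpha-1$ to $\frac{\alpha-1}{\alpha+1}$ via the optimization over the localization scale, which I would present as a short lemma.

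The \textbf{main obstacle} I anticipate is the upper bound on the contribution of the "light" scenery — showing that, after removing the few heavy sites, the probability that the walk accumulates close to $ct$ from sites with $z \le t^{1-\delta}$ decays faster than $\exp\{-t^{\frac{\alpha-1}{\alpha+1}}\}$. This requires a quenched exponential-moment bound of the form $E_0\exp\{\lambda \int_0^t \widetilde z(S_u)\,\d u\} \le \exp\{\lambda \E[\widetilde z]t + o(t\cdot)\}$ uniformly over $\lambda$ in a suitable range, where $\widetilde z$ is the truncated scenery; this is delicate because even the truncated scenery has a heavy-ish tail at the truncation level, and one must use $\alpha>d/2$ (resp.\ $\alpha>1$) precisely to make the variance-type term manageable. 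I would handle this by a further dyadic decomposition of $\widetilde z$ into levels $2^j \le t^{1-\delta}$ and applying the spectral/Donsker--Varadhan bound at each level, summing the costs; the constraint $\alpha>d/2$ guarantees the geometric series of level-costs converges. Once this quenched LLN-type concentration with the correct error exponent is in place, the upper bound assembles by the union bound over heavy sites described above, and together with the matching lower bound this yields~\eqref{ldp}.
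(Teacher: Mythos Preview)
Your proposal has a genuine gap: both bounds are built around the wrong dominant mechanism, and this is why your exponents refuse to match~\eqref{ldp}.

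You repeatedly look for a site with $z(x)\approx ct$ and propose that the walk visits it for unit time. But in a box of side $L$ the maximum of the scenery is $L^{d/\alpha+o(1)}$, so finding $z\approx t$ requires $L\approx t^{\alpha/d}$ (not $t^{1/\alpha}$; you inverted this). In $d=1$ that distance is $t^{\alpha}\gg t$, and the hitting cost is $\exp\{-t^{2\alpha-1+o(1)}\}$, far worse than the target. Conversely, with your suggested box side $t^{1/(\alpha+1)}$ one has $\max z\approx t^{1/(\alpha(\alpha+1))}$, nowhere near $t$. The ``algebraic identity'' you hope will reconcile $2/\alpha-1$ with $\frac{\alpha-1}{\alpha+1}$ does not exist (try $\alpha=2$).

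The correct mechanism, and the one the paper uses, is a \emph{balance} between two costs rather than a single hitting cost. One goes to the maximum of $z$ in a box of side $t^{\mu}$, where the maximum is only $t^{d\mu/\alpha+o(1)}$, and then accumulates local time of order $t^{1-d\mu/\alpha}$ there. The hitting cost is $\exp\{-t^{2\mu-1+o(1)}\}$; the local-time cost (from Chen's moderate deviation estimate for $\ell_t(0)$) is $\exp\{-t^{1-2\mu/\alpha+o(1)}\}$ in $d=1$ and $\exp\{-t^{1-d\mu/\alpha+o(1)}\}$ in $d\ge 2$. Equating these gives $\mu=\frac{\alpha}{\alpha+1}$ (resp.\ $\frac{2\alpha}{2\alpha+d}$) and hence the exponent $\frac{\alpha-1}{\alpha+1}$ (resp.\ $\frac{2\alpha-d}{2\alpha+d}$). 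This is just the $\rho=1$ extrapolation of the first regime of Theorem~\ref{RWRS}; the lower bound needs no new idea.

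For the upper bound, the paper likewise decomposes the scenery into level sets $\mathcal{H}_k=\{\tilde z\ge t^{k\epsilon}\}$ and, for each $k$ above a fixed threshold $l$, bounds $P_0(\ell_t(\mathcal{H}_k)\ge ct^{1-(k+1)\epsilon})$ using Chen's result together with a quenched estimate on $\sup_x E_x[\ell_{t^\eta}(\mathcal{H}_k)]$ (Lemma~\ref{lem1}, proved via McDiarmid). The dominant level is $k=K$, corresponding to $z\approx t^{d\mu/\alpha}$, \emph{not} $z\approx t$; this is why your heavy-site union bound lands on the wrong exponent. What remains---and here your instinct is right---is the truncated part $\int_0^t z(S_u)\wedge t^{l\epsilon}\,\d u$. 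The paper handles this not by Donsker--Varadhan but by a resolvent/Feynman--Kac argument: a Talagrand-type concentration (Lemma~\ref{lem2}) gives $\sup_{|x|\le t}|E_x[\int_0^T (z\wedge t^{\epsilon})(S_u)\,\d u]-T\E[z(0)]|\le T/\log t$ uniformly for $T\in[t^{5\epsilon},t]$, which feeds into a Tauberian estimate for $u_\lambda=(\lambda-\A)^{-1}f$ and yields $P_0(\int_0^t (z\wedge t^{\epsilon})(S_u)\,\d u\ge ct)\le\exp\{-t^{1-7\epsilon}\}$. Since the target exponent is strictly below $1$, this term is negligible.

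In short: redo both bounds with the scenery scale $t^{d\mu/\alpha}$ and the accompanying local-time cost, and replace your dyadic Donsker--Varadhan plan for the light part by a quenched concentration of $E_x[\int_0^T\tilde z(S_u)\,\d u]$ around $T\E[z(0)]$ plus the resolvent trick.
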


\subsection{Random walk in layered random conductance}
\label{sect:RCM}
The model considered here is the random walk in random conductance in 
$\Z^{1+d}$ which is constant along lines parallel to an axis. 
We write $x\in\Z^{1+d}$ as $(x_1, x_2)$ with $x_1\in\Z$ and $x_2\in \Z^d$. 
In what follows, any points in $\R^k\setminus\Z^k$ ($k\in\N$) 
are to be understood as closest lattice points. 
Let $((X_t)_{t\ge 0},P^\omega_x)$ be a continuous time Markov chain with 
jump rates
\begin{equation}
  \omega(x,x\pm \mathbf{e}_i)= 
 \begin{cases}
  z(x_2),& i=1,\\
  1,& i\ge 2.
 \end{cases}
\end{equation}
Such a process is sometimes called the variable speed random walk in the 
random conductance field $\omega$. 
This is related to the random walk in random scenery as follows: 
Let $(S^1,S^2)$ be the continuous time simple random walk on $\Z^{1+d}$. 
Then we have a representation
\begin{equation}
 (X^1_t, X^2_t)_{{t\ge 0}}=(S^1_{A^2_t}, S^2_t)_{{t\ge 0}},
\end{equation}
where the clock process is defined by $A^2_t=\int_0^t z(S^2_u){\rm d}u$. 

This model is introduced in~\cite{ADS16}, Example~1.11 in order to 
demonstrate an anomalous tail behavior of the transition probability 
and an upper bound is obtained.
Define a random distance --- sometimes called the chemical distance ---
by
\begin{equation}
d^\omega(x,y)=\inf_{\gamma\in\Gamma(x,y)}\sum_{e\in \gamma}
\frac{1}{\sqrt{\omega(e)}\vee 1}, 
\end{equation}
where $\Gamma(x,y)$ denotes the set of nearest neighbor paths connecting
$x$ and $y$. Then 
the following upper bound is proved in~\cite{ADS16}, Theorem~1.10:
\begin{equation}
 P^\omega_x(X_t=y)\le ct^{-\frac{d}2}
\begin{cases}
 \exp\left\{-c^{-1}\frac{d^\omega(x,y)^2}{t}\right\},&t>d^\omega(x,y),\\[5pt]
 \exp\left\{-c^{-1}d^\omega(x,y)\left(1\vee \log \frac{d^\omega(x,y)}{t}
\right)\right\},&t\le d^\omega(x,y).
\end{cases}
\label{HK-general}
\end{equation}
We take $x=0$, $y=t^{\delta}\mathbf{e}_1+t^\gamma\mathbf{e}$ with 
$\gamma\ge 0$, $\delta\in (\frac12,\infty)$, and $\mathbf{e}$ being a unit
vector orthogonal to $\mathbf{e}_1$, that is, we are looking at a 
super-diffusive deposition. It is not difficult to show that
\begin{equation}
 d^\omega(0,t^\delta\mathbf{e}_1+t^\gamma\mathbf{e})=
t^{\frac{2\delta\alpha}{2\alpha+d}\vee\gamma+o(1)}, 
\textrm{ as }t\to\infty
\label{asymp-d}
\end{equation}
$\P$-almost surely (see Lemma~1.12 in~\cite{ADS16} for the case $d=1$).
Then \eqref{HK-general} yields a stretched-exponential upper bound
\begin{equation}
 P^\omega_0(X_t=t^\delta\mathbf{e}_1+t^\gamma\mathbf{e})
\le 
 \exp\left\{-t^{\left(\frac{4\delta\alpha}{2\alpha+d}-1\right)\vee(2\gamma-1)+o(1)}\right\}
\end{equation}
when $\frac{2\delta\alpha}{2\alpha+d}\vee\gamma\le 1$. 
We are interested in whether this captures the correct asymptotics. 
The following theorem shows that it is sharp if and only if 
$\delta=\frac{2\alpha+d}{2\alpha}$ and $\gamma\le 1$. 
Note that in this case, we have
$d^\omega(0,t^\delta\mathbf{e}_1+t^\gamma\mathbf{e})=t^{1+o(1)}$ 
which corresponds to the boundary case of~\eqref{HK-general}. 

\begin{theorem}
\label{exp}
Let $\gamma,\delta\ge 0$ and $\mathbf{e}$ be a unit vector orthogonal 
to $\mathbf{e}_1$. 
Then for $\P$-almost every $\omega$, 
\begin{equation}
  P^\omega_0(X_t=t^\delta\mathbf{e}_1+t^\gamma\mathbf{e})=
 \exp\left\{-t^{q(\alpha,\delta)\vee((2\gamma-1)\wedge\gamma)+o(1)}\right\}
\end{equation}
as $t\to \infty$, where with the convention $\frac{\alpha}0=\infty$, 
for $d=1$,
\begin{equation}
 q(\alpha,\delta)=
\begin{cases}
0,&\delta< \frac{1}{2}\vee\frac{\alpha+1}{4\alpha},\\[5pt]
2\delta-1,& \delta\in \left[\frac{1}{2},
\frac{\alpha}{\alpha+1}\right),\\[5pt]
\frac{4\alpha\delta-\alpha-1}{3\alpha+1},
&\delta\in [\frac{\alpha}{\alpha+1}\vee\frac{\alpha+1}{4\alpha},
\frac{2\alpha+1}{2\alpha}],\\[5pt]
\frac{\alpha(2\delta-1)}{\alpha+1},
&\delta\in (\frac{2\alpha+1}{2\alpha},\frac{\alpha}{(\alpha-1)_+}),\\[5pt]
\delta,&\delta\ge \frac{\alpha}{(\alpha-1)_+}.
\end{cases}\label{q}
\end{equation}
The second case is to be understood as void for $\alpha<1$.
For $d\ge 2$, 
\begin{equation}
 q(\alpha,\delta)=
\begin{cases}
0,&\delta< \frac{1}{2}\vee\frac{d}{4\alpha},\\[5pt]
2\delta-1,& \delta\in \left[\frac{1}{2},
\frac{2\alpha}{2\alpha+d}\right),\\[5pt]
\frac{4\alpha\delta-d}{4\alpha+d},
&\delta\in [\frac{2\alpha}{2\alpha+d}\vee\frac{d}{4\alpha},
\frac{2\alpha+d}{2\alpha}],\\[5pt]
\frac{\alpha(2\delta-1)}{\alpha+d},
&\delta\in (\frac{2\alpha+d}{2\alpha},\frac{\alpha}{(\alpha-d)_+}),\\[5pt]
\delta,&\delta\ge \frac{\alpha}{(\alpha-d)_+}.
\end{cases}
\label{q-hd}
\end{equation}
with the second case understood as void for $\alpha<\frac{d}2$.
\end{theorem}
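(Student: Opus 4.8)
The plan is to exploit the time-change identity $(X^1_t,X^2_t)=(S^1_{A^2_t},S^2_t)$, in which, under $P_0$, the coordinates $S^1,S^2$ are independent continuous-time simple random walks on $\Z$ and $\Z^d$ and $A^2_t=\int_0^t z(S^2_u)\,\d u$ is a functional of $S^2$ alone. Conditioning on the path of $S^2$,
\begin{equation}
P^\omega_0\bigl(X_t=t^\delta\mathbf{e}_1+t^\gamma\mathbf{e}\bigr)
=E_0\!\left[\mathbf 1\{S^2_t=t^\gamma\mathbf{e}\}\,h(A^2_t)\right],
\qquad h(s):=P_0\bigl(S^1_s=t^\delta\mathbf{e}_1\bigr).
\end{equation}
Three inputs feed into the analysis. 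First, the quenched clock estimate: Theorem~\ref{RWRS} applied in dimension $d$ gives, $\P$-almost surely, $P_0(A^2_t\ge t^\rho)=\exp\{-t^{p(\alpha,\rho)+o(1)}\}$, where $p(\alpha,\cdot)$ is the exponent there --- non-decreasing in $\rho$, vanishing below the threshold $\tfrac{\alpha+1}{2\alpha}\vee1$ (resp.\ $\tfrac{d}{2\alpha}\vee1$), unbounded as $\rho\to\infty$. Second, the one-dimensional local deviation estimate: for $s=t^\sigma$, $h(s)=\exp\{-t^{g_\delta(\sigma)+o(1)}\}$ with $g_\delta(\sigma)=0$ for $\sigma\ge2\delta$, $g_\delta(\sigma)=2\delta-\sigma$ for $\delta\le\sigma\le2\delta$, and $g_\delta(\sigma)=\delta$ for $\sigma\le\delta$ (the logarithmic factor in the ballistic range being absorbed into the $o(1)$); moreover $h$ is unimodal in $s$ with peak near $s=t^{2\delta}$. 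Third, the transversal estimate $P_0(S^2_t=t^\gamma\mathbf{e})=\exp\{-t^{((2\gamma-1)\wedge\gamma)\vee0+o(1)}\}$. The last two are classical facts about random walks.

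For the upper bound I would decompose according to the exponential scale of the clock. Fixing $\varepsilon>0$ and $J$ so large that $p(\alpha,J\varepsilon)$ exceeds a prescribed bound (possible by the unbounded second regime of $p$),
\begin{equation}
P^\omega_0\bigl(X_t=t^\delta\mathbf{e}_1+t^\gamma\mathbf{e}\bigr)
\le\sum_{j=0}^{J-1}P_0\!\left(A^2_t\ge t^{j\varepsilon},\ S^2_t=t^\gamma\mathbf{e}\right)\sup_{s\le t^{(j+1)\varepsilon}}h(s)
\;+\;P_0\!\left(A^2_t\ge t^{J\varepsilon}\right).
\end{equation}
The last term is negligible. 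In each summand I would bound the joint probability by the minimum of its two marginals, which by the clock estimate (in dimension $d$) and the transversal estimate is $\exp\{-t^{(p(\alpha,j\varepsilon)\vee((2\gamma-1)\wedge\gamma))-o(1)}\}$, and bound the supremum by $\exp\{-t^{g_\delta(j\varepsilon)-o(1)}\}$. Summing and letting $\varepsilon\downarrow0$ gives, $\P$-almost surely, the bound $\exp\{-t^{(q(\alpha,\delta)\vee((2\gamma-1)\wedge\gamma))-o(1)}\}$ with $q(\alpha,\delta):=\inf_{\rho\ge0}\bigl(p(\alpha,\rho)\vee g_\delta(\rho)\bigr)$.

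For the matching lower bound I would fix a near-minimizing $\rho$ and use a two-phase strategy for $S^2$ together with the independence of $S^1$: over $[0,t/2]$ let $S^2$ follow the scenery-searching strategy underlying Theorem~\ref{RWRS} that achieves $A^2_{t/2}\ge t^\rho$ while remaining within a fixed power of $t$ of the origin; over $[t/2,t]$ let $S^2$ return and then reach $t^\gamma\mathbf{e}$. By the Markov property at time $t/2$ (the return step reversing the outward excursion at the same exponential cost) and the non-negativity of $z$, the event $\{A^2_t\ge t^\rho,\ S^2_t=t^\gamma\mathbf{e}\}$ has $P_0$-probability at least $\exp\{-t^{(p(\alpha,\rho)\vee((2\gamma-1)\wedge\gamma))+o(1)}\}$, and on it $A^2_t$ is at most a fixed power of $t$, so (using unimodality of $h$) the independent constraint $S^1_{A^2_t}=t^\delta\mathbf{e}_1$ has probability at least $\exp\{-t^{g_\delta(\rho)+o(1)}\}$. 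Multiplying and optimizing over $\rho$ matches the upper bound. It then remains to compute $q(\alpha,\delta)$: it is the infimum of the pointwise maximum of the non-decreasing piecewise-linear $p(\alpha,\cdot)$ and the non-increasing piecewise-linear $g_\delta$, hence attained at a breakpoint or a crossing. Checking the finitely many possibilities --- the flat part $p\equiv0$ gives $q=0$, or $q=2\delta-1$ in the range of $\alpha$ for which $p$ jumps up at $\rho=1$ (precisely the range of Theorem~\ref{LDP}); a crossing $p(\alpha,\rho)=g_\delta(\rho)$ in each of the two non-trivial pieces of $p$ gives, after solving a linear equation, the remaining values --- reproduces \eqref{q} and \eqref{q-hd}, the ``void'' clauses reflecting that range of $\alpha$.

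The step I expect to be the real obstacle is the lower-bound construction: it requires a refinement of Theorem~\ref{RWRS} that controls jointly the value of the clock and the endpoint of $S^2$, and one must localize the scenery-searching strategy so that it does not conflict with the ballistic excursion to $t^\gamma\mathbf{e}$ and so that $A^2_t$ stays polynomially bounded --- this last point being essential in order for the independent cost of $S^1$ to be governed by $g_\delta(\rho)$ and not by a larger exponent. The optimization, although it must be done carefully to match the stated thresholds, is routine.
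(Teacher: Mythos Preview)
Your proposal is correct and follows essentially the same route as the paper: reduce via the time-change representation to $E_0[p_{A_t}(0,t^\delta)1_{\{S_t=t^\gamma\}}]$, slice according to the exponential scale of the clock $A_t$, feed in Theorem~\ref{RWRS} for the clock tail and Lemma~\ref{RWHK} for $h$, then optimize to obtain $q(\alpha,\delta)=\inf_\rho\bigl(p(\alpha,\rho)\vee g_\delta(\rho)\bigr)$. The only noteworthy difference is in the upper bound, where the paper first decouples via Cauchy--Schwarz, $E_0[p_{A_t}(0,t^\delta)1_{\{S_t=t^\gamma\}}]\le E_0[p_{A_t}(0,t^\delta)^2]^{1/2}P_0(S_t=t^\gamma)^{1/2}$, and only then slices the first factor, whereas you slice first and bound the joint event by the minimum of its marginals; both devices give the same exponent. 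The refinement you single out as the real obstacle---joint control of the clock value and of the endpoint of $S^2$, together with a polynomial ceiling on $A_t$---is exactly what the paper supplies: the bridge estimate~\eqref{bridge}, $P_0(A_t\ge t^\rho,S_t=0)\ge\exp\{-t^{p(\alpha,\rho)+o(1)}\}$, is proved alongside the lower bound of Theorem~\ref{RWRS}, and in the proof of Theorem~\ref{exp} it is combined with the Markov property at $t/2$ and an explicit truncation $\{A_t\le t^M\}$ (the excess being discarded via the upper bound of Theorem~\ref{RWRS}), precisely as you outline.
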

\begin{theorem}
\label{alg}
{Let $\gamma\le \frac{1}{2}$ and $\mathbf{e}$ be a unit vector 
orthogonal to $\mathbf{e}_1$.} Suppose $d=1$ and 
$\delta<\frac{1}{2}\vee\frac{\alpha+1}{4\alpha}$ or
$d\ge 2$ and $\delta<\frac12\vee \frac{d}{4\alpha}$. 
Then, $\P$-almost surely,  
$P^\omega_0(X_t=t^\delta\mathbf{e}_1{+t^\gamma}\mathbf{e})$ is 
bounded from below by a negative power of $t$. 
\end{theorem}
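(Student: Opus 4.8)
The plan is to exploit the time-change representation $(X^1_t,X^2_t)=(S^1_{A^2_t},S^2_t)$ recalled in Section~\ref{sect:RCM}, where $(S^1,S^2)$ is a continuous-time simple random walk on $\Z\times\Z^d$ and $A^2_t=\int_0^t z(S^2_u)\,\d u$ is the clock process, i.e.\ the additive functional of Section~\ref{sect:RWRS} in the directions orthogonal to $\mathbf{e}_1$. Since $S^1$ and $S^2$ are independent, conditioning on $S^2$ and integrating out $S^1$ gives
\begin{equation}
 P^\omega_0\big(X_t=t^\delta\mathbf{e}_1+t^\gamma\mathbf{e}\big)
 = E_0\big[\mathbf 1\{S^2_t=t^\gamma\mathbf{e}\}\,g_t(A^2_t)\big],\qquad
 g_t(s):=P_0\big(S^1_s=\lfloor t^\delta\rceil\big),
 \label{eq:alg-repn}
\end{equation}
where now $E_0$ refers to the law of $S^2$ alone. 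The first step is the Gaussian lower bound for the one-dimensional continuous-time walk: there is $c>0$ with $g_t(s)\ge c\,s^{-1/2}$ whenever $s\ge1$ and $t^\delta\le\sqrt s$. Hence, if there is an event of $P_0$-probability at least $t^{-C_1}$ on which $S^2_t=t^\gamma\mathbf{e}$ and $t^{2\delta}\le A^2_t\le t^{C_0}$, then $g_t(A^2_t)\ge c\,t^{-C_0/2}$ there and the left-hand side of \eqref{eq:alg-repn} is at least $c\,t^{-C_0/2-C_1}$, a negative power of $t$. So it suffices to prove that, for suitable constants $C_0=C_0(\alpha,d,\delta)$ and $C_1=C_1(\alpha,d,\delta,\gamma)$, one has $\P$-almost surely for all large $t$
\begin{equation}
 P_0\big(S^2_t=t^\gamma\mathbf{e},\ t^{2\delta}\le A^2_t\le t^{C_0}\big)\ge t^{-C_1}.
 \label{eq:alg-claim}
\end{equation}

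To prove \eqref{eq:alg-claim} I would split $[0,t]$ into $[0,t/2]$, on which $S^2$ accumulates scenery, and $[t/2,t]$, on which it is steered to $t^\gamma\mathbf{e}$. On the first interval I would use the lower-bound mechanism behind the last clause of Theorem~\ref{RWRS}: the hypothesis on $\delta$ is exactly $2\delta<\frac{\alpha+1}{2\alpha}\vee1$ when $d=1$ and $2\delta<\frac d{2\alpha}\vee1$ when $d\ge2$, i.e.\ the range in which $P_0(A^2_{t/2}\ge t^{2\delta})$ is bounded below by a negative power of $t$; moreover in that range the strategy realizing this bound keeps $S^2$ at the diffusive scale $O(\sqrt t)$ of the origin (localizing in an $O(\sqrt t)$-box when the walk is recurrent, and otherwise reaching, within a ball of radius $o(\sqrt t)$, a single site carrying exceptionally large scenery). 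Imposing in addition that the whole trajectory up to time $t$ stays in $[-t,t]^d$ costs only a constant factor and, by the extreme-value estimate for $t^d$ independent variables with the tail~\eqref{ass-tail}, forces $A^2_t\le t\max_{|x|\le t}z(x)\le t^{1+d/\alpha+o(1)}\le t^{C_0}$ for all large $t$, $\P$-almost surely. Thus $\P$-almost surely for all large $t$ there is a lower bound $t^{-C_2}$ for $P_0\big(A^2_{t/2}\ge t^{2\delta},\ |S^2_{t/2}|\le\sqrt t,\ S^2 \text{ stays in } [-t,t]^d \text{ on } [0,t/2]\big)$. Finally, by the Markov property at time $t/2$ and the hypothesis $\gamma\le\frac12$, which yields $|y-t^\gamma\mathbf{e}|\le2\sqrt t$ for $|y|\le\sqrt t$, the remaining factor $\min_{|y|\le\sqrt t}P_y\big(S^2_{t/2}=t^\gamma\mathbf{e},\ S^2 \text{ stays in } [-t,t]^d\big)$ is at least $c\,t^{-d/2}$ by the local central limit theorem. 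Multiplying the three bounds gives \eqref{eq:alg-claim}, and hence the theorem.

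The step I expect to be the main obstacle is the one just invoked: that throughout the polynomial-lower-bound range of Theorem~\ref{RWRS} the scenery can be accumulated by time $t/2$ while keeping $S^2$ within distance $O(\sqrt t)$ of the origin and $A^2_{t/2}$ below a fixed power of $t$. This localization is precisely what makes both the transport over $[t/2,t]$ (since $\gamma\le\frac12$) and the one-dimensional bound on $g_t$ cost only polynomial factors, and it is where the thresholds $\delta<\frac12\vee\frac{\alpha+1}{4\alpha}$ and $\delta<\frac12\vee\frac d{4\alpha}$ enter. It is easy when $\alpha>\frac d2$, equivalently $2\delta<1$: then $\E[z(0)]<\infty$ and $A^2_t/t\to\E[z(0)]$ $\P\otimes P_0$-almost surely by~\eqref{LLN}, so no genuine deviation is required and one only has to decouple the two asymptotically non-degenerate events $\{A^2_{t/2}\ge t^{2\delta}\}$ and $\{|S^2_{t/2}|\le\sqrt t\}$; when $\alpha\le\frac d2$ one has to read off the quantitative localization from the proof of Theorem~\ref{RWRS}. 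A minor additional point is to check that the confinement to $[-t,t]^d$ used for the upper bound on $A^2_t$ occurs with $P_0$-probability bounded away from zero, so that it can be imposed at no cost.
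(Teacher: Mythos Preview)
Your proposal is correct and follows essentially the same route as the paper. Two minor differences are worth noting. First, for the upper constraint on $A^2_t$ the paper does not confine the walk to $[-t,t]^d$; instead it simply subtracts $P_0(A_t\ge t^{(\alpha+d)/\alpha})$, which decays super-polynomially by Theorem~\ref{RWRS}, so the same $C_0=(\alpha+d)/\alpha$ works without any extreme-value input. Second, and more to the point of what you flag as the main obstacle, the paper does not leave the localization at scale $O(\sqrt t)$ to be ``read off'' later: it proves directly, in the course of the lower bounds for Theorem~\ref{RWRS}, the bridge estimate~\eqref{bridge}, namely $P_0(A_{t/2}\ge (t/2)^{2\delta},\,S_{t/2}=0)\ge t^{-m}$ in the relevant range of $2\delta$. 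With $S_{t/2}=0$ rather than $|S_{t/2}|\le\sqrt t$, the Markov step on $[t/2,t]$ becomes a single application of the local CLT to $P_0(S_{t/2}=t^\gamma)$. So the ingredient you were worried about is available in the paper as a self-contained lemma, and using it streamlines your argument.
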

Figures~\ref{q:a<1} and~\ref{q:a>1} show the phase diagrams of the 
displacement exponents 
$q(\alpha,\delta)\vee((2\gamma-1)\wedge\gamma)$ in the one-dimensional
case. 
\begin{figure}[h]
 \begin{picture}(300,180)(-55,-13)
 \put(-10,-10){$0$}
 \put(0,0){\vector(1,0){220}}
 \put(0,0){\vector(0,1){150}}
 \put(225,-10){$\delta$}
 \put(0,155){$\gamma$}
 \put(65,-13){$\frac{\alpha+1}{4\alpha}$}
 \put(70,0){\line(0,1){50}}
 \put(145,-13){$\frac{2\alpha+1}{2\alpha}$}
 \put(150,0){\line(0,1){100}}
 \put(-13,50){$\frac12$}
 \put(0,50){\line(1,0){70}}
 \put(-10,100){1}
 \put(0,100){\line(1,0){150}}
 \qbezier(70,50)(110,75)(150,100)
 \qbezier(150,100)(175,124)(200,148)
 \put(30,20){$0$}
 \put(90,30){$\frac{4\alpha\delta-\alpha-1}{3\alpha+1}$}
 \put(165,50){$\frac{\alpha(2\delta-1)}{\alpha+1}$}
 \put(15,70){$2\gamma-1$}
 \put(60,120){$\gamma$}
 \end{picture}
\caption{The phase diagram of the displacement exponents in the case $d=1$ 
and $\alpha\le 1$. The slopes of the increasing piecewise linear curve 
starting at $(\frac{\alpha+1}{4\alpha},\frac12)$ are 
$\frac{2\alpha}{3\alpha+1}$ and $\frac{2\alpha}{\alpha+1}$ from the left.}
\label{q:a<1} 
\end{figure}
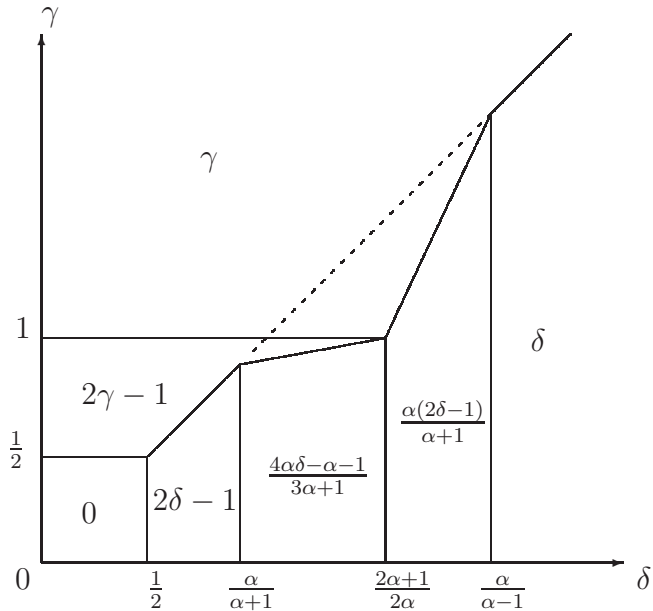
\begin{figure}[h]
 \begin{picture}(300,230)(-55,-13)
 \put(-10,-10){$0$}
 \put(0,0){\vector(1,0){220}}
 \put(0,0){\vector(0,1){200}}
 \put(225,-10){$\delta$}
 \put(0,205){$\gamma$}
 \put(75,0){\line(0,1){75}}
 \put(70,-13){$\frac{\alpha}{\alpha+1}$}
 \put(40,-13){$\frac12$}
 \put(40,0){\line(0,1){40}}
 \put(125,-13){$\frac{2\alpha+1}{2\alpha}$}
 \put(130,0){\line(0,1){85}}
 \put(165,-13){$\frac{\alpha}{\alpha-1}$}
 \put(170,0){\line(0,1){170}}
 \put(-13,40){$\frac12$}
 \put(0,40){\line(1,0){40}}
 \put(-10,85){1}
 \put(0,85){\line(1,0){130}}
 \qbezier(75,75)(58,58)(40,40)
 \qbezier(75,75)(103,80)(130,85)
 \qbezier(130,85)(150,128)(170,170)
 \qbezier(170,170)(185,185)(200,200)
 \dashline{2}(80,80)(170,170)
 \put(15,15){$0$}
 \put(42,20){$2\delta-1$}
 \put(85,30){$\frac{4\alpha\delta-\alpha-1}{3\alpha+1}$}
 \put(135,50){$\frac{\alpha(2\delta-1)}{\alpha+1}$}
 \put(185,80){$\delta$}
 \put(15,60){$2\gamma-1$}
 \put(60,150){$\gamma$}
 \end{picture}
\caption{The phase diagram of the displacement exponents in the case $d=1$ 
and $\alpha> 1$. The slopes of the increasing piecewise linear curve starting
at $(\frac12,\frac12)$ are 1, $\frac{2\alpha}{3\alpha+1}$, 
$\frac{2\alpha}{\alpha+1}$ and 1 from the left.}\label{q:a>1} 
\end{figure}

In the heat kernel estimate for random walks in random environment, 
it is sometimes useful to give a probabilistic control on the 
range of time (or space) in which an estimate like Theorem~\ref{exp} 
holds true. It turns out that we cannot hope much in this model.
For $r>0$, define
\begin{equation}
 \tau_r=\sup\left\{t\ge 0\colon 
 \left|\frac{1}{\log t}\log\left|
\log P^\omega_0(X_t=t^\delta\mathbf{e}_1+t^\gamma\mathbf{e})
\right|-
 q(\alpha,\delta)\right|>r\right\}.
\end{equation}
that is, the last time when the upper or lower bound is violated by $r$
in the exponent. 
\begin{theorem}
\label{range} 
Suppose $q(\alpha,\delta)>(2\gamma-1)\wedge\gamma$. 
 Then in the third or fourth regimes in~\eqref{q} and~\eqref{q-hd}, 
 for sufficiently small $r>0$, 
 \begin{equation}
 \P(\tau_r\ge t){\ge}t^{-r(C_1+o(1))}
 \label{tau}
 \end{equation}
 as $t\to\infty$, where 
 \begin{equation}
 C_1=
 \begin{cases}
 \frac{\alpha+1}{2},  &d=1 \textrm{ and }\delta\in(\frac{\alpha}{\alpha+1}\vee\frac{\alpha+1}{4\alpha},\frac{2\alpha+1}{2\alpha}]\\[5pt]
 \alpha+\frac{d}{2}, &d\ge 2 \textrm{ and }\delta\in(\frac{2\alpha}{2\alpha+d}\vee\frac{d}{4\alpha},\frac{2\alpha+d}{2\alpha}],\\[5pt]
 d, &d\ge 1\textrm{ and }
 \delta\in (\frac{2\alpha+d}{2\alpha},\frac{\alpha}{(\alpha-d)_+}).
 \end{cases}
 \end{equation}
 In the second regime in~\eqref{q} and~\eqref{q-hd}, 
 for sufficiently small $r>0$, 
 \begin{equation}
 \P(\tau_r\ge t)
 {\ge}
 \begin{cases}
 t^{-\alpha-\delta(\alpha-1)-\frac{r}{2}(3\alpha+1))+o(1)},&d=1,\\[5pt]
 t^{-2\alpha+\delta(2\alpha+d)-\frac{r}{2}(4\alpha+d))+o(1)},&d\ge 2.
 \end{cases}
 \label{tau3}
 \end{equation}
\end{theorem}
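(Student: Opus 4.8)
The plan is to prove the lower bound by a direct construction: for each large $t$ we exhibit a scenery event $E_t$, of the probability claimed in \eqref{tau} resp.\ \eqref{tau3}, on which the lower bound of Theorem~\ref{exp} is already violated by more than $r$ in the exponent at time $s=t$. Since $\tau_r\ge t$ holds as soon as the two-sided estimate defining $\tau_r$ fails at a single $s\ge t$, and since in the regime $q(\alpha,\delta)>(2\gamma-1)\wedge\gamma$ the exponent appearing in Theorem~\ref{exp} is exactly $q(\alpha,\delta)$, it suffices to show that on $E_t$ one has $P^\omega_0(X_t=t^\delta\mathbf{e}_1+t^\gamma\mathbf{e})\ge\exp\{-t^{\,q(\alpha,\delta)-r}\}$, with a small fixed margin so that the strict inequality in the definition of $\tau_r$ survives the $o(1)$ errors.

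The event $E_t$ records the presence of a single anomalously large conductance --- a highly conductive layer --- at a well-chosen distance: $E_t=\{\exists\,y\in\Z^d:\ |y|\le R,\ z(y)\ge v\}$, where $R=t^{\beta}$ and $v=t^{\nu}$ are parameters to be fixed. By independence and \eqref{ass-tail}, $\P(E_t)=t^{\,d\beta-\alpha\nu+o(1)}$ as soon as $d\beta<\alpha\nu$. On $E_t$ I bound $P^\omega_0(X_t=t^\delta\mathbf{e}_1+t^\gamma\mathbf{e})$ from below by keeping only trajectories of $X=(S^1_{A^2_\cdot},S^2_\cdot)$ that: (i) bring $S^2$ from $0$ to the layer $y$, of probability at least $\exp\{-c\,t^{(2\beta-1)_+\wedge\beta}\}$; (ii) confine $S^2$ to a box of side $(t/\theta)^{1/d}$ around $y$ for the remaining time, so that the occupation time at $y$ is $\theta=t^{\phi}$ and hence $A^2_t\ge\theta v=t^{\phi+\nu}$, of probability at least $\exp\{-c\,t^{(1-2(1-\phi)/d)_+}\}$ from the principal Dirichlet eigenvalue (this is $\exp\{-c\,t^{(2\phi-1)_+}\}$ when $d=1$ and $\exp\{-c\,t^{\phi}\}$ when $d=2$); (iii) make the one-dimensional walk $S^1$, run for the clock time $A^2_t\approx t^{\phi+\nu}$, hit $t^\delta$, of probability at least $\exp\{-c\,t^{(2\delta-\phi-\nu)_+}\}$; and (iv) bring $S^2$ back to $t^\gamma\mathbf{e}$, of probability at least $\exp\{-c\,t^{(2\gamma-1)\wedge\gamma}\}$ --- which is how the term $(2\gamma-1)\wedge\gamma$ enters. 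Using the independence of $S^1$ and $S^2$ and $\sum_i\exp\{-t^{a_i}\}=\exp\{-t^{\max_i a_i+o(1)}\}$, these factors combine to give, on $E_t$, $P^\omega_0(X_t=t^\delta\mathbf{e}_1+t^\gamma\mathbf{e})\ge\exp\{-t^{\,c^\ast+o(1)}\}$ with $c^\ast=\max\{(2\beta-1)_+\wedge\beta,\ (1-2(1-\phi)/d)_+,\ (2\delta-\phi-\nu)_+,\ (2\gamma-1)\wedge\gamma\}$.

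It remains to choose $\beta,\phi,\nu$ so as to force $c^\ast\le q(\alpha,\delta)-r$ while maximising the probability exponent $d\beta-\alpha\nu$. This is a small linear program; its optimum is attained with the constraints $(2\beta-1)_+\wedge\beta=q(\alpha,\delta)-r$, $(1-2(1-\phi)/d)_+=q(\alpha,\delta)-r$ and $2\delta-\phi-\nu=q(\alpha,\delta)-r$ all active, and a careful solution --- taking into account that $v$ must also dominate the typical conductance maximum over the confinement box --- produces a probability exponent of the form $-rC_1+o(1)$ with $C_1$ as in the statement, which gives \eqref{tau} in the third and fourth regimes. In the second regime one has $\alpha>d/2\vee1$, so $\E[z(0)]<\infty$ and the clock already carries the typical value $A^2_t\asymp t\,\E[z(0)]$; here the layer is only needed to push $A^2_t$ above $t^{1+r}$, so the constraint $2\delta-\phi-\nu=q(\alpha,\delta)-r$ is replaced by $\phi+\nu=1+r$, and the same optimisation produces the exponent in \eqref{tau3}, whose $r$-independent part is precisely the cost of inserting any scenery feature at all into the otherwise scenery-free diffusive strategy underlying $q(\alpha,\delta)=2\delta-1$. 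Throughout one must check the consistency conditions --- that the time budget $t$ accommodates the travel, confinement and return, and that $q(\alpha,\delta)-r$ stays in the regime under consideration --- all of which hold for $r$ small by the defining inequalities of the regimes.

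The main obstacle is this optimisation: pinning down the localisation cost exponent in each dimension (with $d=2$ critical), and, above all, verifying that the near-optimal single layer is exploited optimally by the walk, i.e.\ that no competing mechanism --- a layer of different size or at a different distance, a realisation of the anomaly at a later time $s>t$, or several weaker layers --- gives a larger probability; together with a careful bookkeeping of the $o(1)$'s from \eqref{ass-tail} and from the eigenvalue and local-limit estimates through the $\log\log$ in the definition of $\tau_r$, so that the violation is strict. The opposite direction, that an anomalously \emph{small} $P^\omega_0$ would require depleting an entire block of the scenery and is therefore only stretched-exponentially likely, confirms that the large-conductance mechanism is the dominant one and need not be pursued for the lower bound.
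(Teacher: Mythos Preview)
Your approach is exactly the paper's: exhibit a scenery event on which the lower-bound strategy from Theorems~\ref{RWRS} and~\ref{exp} already produces a heat kernel larger than $\exp\{-t^{q(\alpha,\delta)-r}\}$, and compute the probability of that event. The paper simply writes down the event in each regime (see~\eqref{event-violate} and Remark~\ref{adapt}) and asserts the improved bound; you parametrize by $(\beta,\phi,\nu)$ and set up a linear program. So the route is the same.

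There is, however, a genuine gap at the point where you write ``a careful solution \ldots\ produces a probability exponent of the form $-rC_1+o(1)$ with $C_1$ as in the statement.'' You never carry this out, and in fact the single-peak mechanism you describe does \emph{not} deliver the stated $C_1$. Take $d=1$ in the third regime. Your three active constraints read $2\beta-1=q-r$, $2\phi-1=q-r$, $2\delta-\phi-\nu=q-r$, whence $\beta=\phi=\mu-\tfrac r2$ and $\nu=2\delta-\tfrac32(q-r)-\tfrac12=\tfrac{\mu}{\alpha}+\tfrac{3r}{2}$ (not $\tfrac{\mu}{\alpha}+\tfrac r2$). The event probability is then $t^{\beta-\alpha\nu}=t^{-r(3\alpha+1)/2+o(1)}$, so your program yields $C_1=\tfrac{3\alpha+1}{2}$, not $\tfrac{\alpha+1}{2}$. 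The point is that the heat-kernel exponent involves a \emph{third} cost, the displacement $2\delta-\phi-\nu$, which is absent from the RWRS tail; reducing it to $q-r$ forces a larger $\nu$ than the paper's event (which is tailored to the RWRS tail alone) provides. The paper's own proof glosses over this same displacement term, so you should not take the stated constants on faith --- either a different event/mechanism is needed, or the constants require revision.

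Two smaller issues. First, your confinement cost in step~(ii), $\exp\{-ct^{1-2(1-\phi)/d}\}$, is suboptimal for $d\ge 3$: the paper's lower bound for Theorem~\ref{RWRS} uses Chen's moderate deviation (Theorem~\ref{Chen}) to get $P_0(\ell_t(0)\ge t^\phi)\ge\exp\{-t^{\phi+o(1)}\}$ in every $d\ge 2$, which is strictly better and is what ``follow the same strategy'' refers to. Second, for a lower bound on $\P(\tau_r\ge t)$ you do not need to rule out competing mechanisms or later times $s>t$; any event forcing a violation at time $t$ suffices.
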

{\begin{remark}
We only stated the lower bounds since the point of 
Theorem~\ref{range} is the \emph{slow} decay of $\P(\tau_r\ge t)$. 
With some additional effort, it is possible to find the matching upper bound
by largely repeating the argument for Theorem~\ref{RWRS}. 
\end{remark}}
\begin{remark}
We exclude the case $q(\alpha,\delta)\le (2\gamma-1)\wedge\gamma$ since
there the heat kernel bound has little to do with $z$-field. 
In particular, the only possibility is that the lower bound is violated
and it must be because of atypically small values of $z$. Based on this 
observation, one can show that for sufficiently small $r$, 
$\P(\tau_r\ge t)$ is bounded by $\exp\{-t^{C_2}\}$ for some $C_2>0$. 
\end{remark}
{\section{A bound on the continuous time random walk}
We frequently use the following estimate on the transition probability of the
continuous time simple random walk
$p_t(x,y)=P_x(S_t=y)$.
This can be found in~\cite{DD05},
Proposition~4.2 and~4.3. 
\begin{lemma}
\label{RWHK}
There exist positive constants $c_1$--$c_4$ such that when $t\ge 1$, 
\begin{equation}
c_1t^{-\frac{d}2}\exp\left\{-c_2\frac{|x|^2}t\right\}
\le p_t(0,x)\le
c_3t^{-\frac{d}2}\exp\left\{-c_4\frac{|x|^2}t\right\}
\end{equation}
for $|x|\le t$ and 
\begin{equation}
 \exp\left\{-c_2|x|\left(1\vee\log\frac{|x|}t\right)\right\}
\le p_t(0,x)\le
 \exp\left\{-c_4|x|\left(1\vee\log\frac{|x|}t\right)\right\}
\end{equation}
for $|x|> t$.
\end{lemma}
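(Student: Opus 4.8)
The plan is to work from the Fourier representation
\begin{equation*}
p_t(0,x)=\frac1{(2\pi)^d}\int_{(-\pi,\pi]^d}e^{-ik\cdot x}e^{t\psi(k)}\,\d k,\qquad
\psi(k)=-c_0\sum_{j=1}^d(1-\cos k_j),
\end{equation*}
where $c_0>0$ is the normalization constant fixed by the jump rates. The one device behind both the upper bounds and the sharp near-diagonal lower bound is the complex shift $k\mapsto k+i\theta$, legitimate since $e^{t\psi}$ is entire and $2\pi$-periodic in its real directions, which gives
\begin{equation*}
p_t(0,x)=\frac{e^{-\theta\cdot x}}{(2\pi)^d}\int_{(-\pi,\pi]^d}e^{-ik\cdot x}e^{t\psi(k+i\theta)}\,\d k,\qquad
\mathrm{Re}\,\psi(k+i\theta)\le c_0\sum_{j=1}^d(\cosh\theta_j-1)-c_1|k|^2
\end{equation*}
for $k\in(-\pi,\pi]^d$, the last inequality following from $1-\cos k_j\cosh\theta_j=(1-\cosh\theta_j)+\cosh\theta_j(1-\cos k_j)$ together with $\cosh\theta_j\ge1$ and $1-\cos k_j\gtrsim k_j^2$.

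For the upper bounds I would integrate the modulus: the displayed estimate gives $p_t(0,x)\le Ct^{-d/2}\exp\{-\theta\cdot x+c_0t\sum_j(\cosh\theta_j-1)\}$, and with $\theta$ parallel to $x$ and $|\theta|=s$ this becomes $Ct^{-d/2}\exp\{-s|x|+C'ts^2e^s\}$; for $|x|\le t$ one takes $s\asymp|x|/t\le1$ and optimizes to get the Gaussian bound. For $|x|>t$ one discards the prefactor and bounds $p_t(0,x)\le P_0(S_t\cdot x/|x|\ge|x|)$, then applies the exponential Chebyshev inequality with $E_0[e^{\lambda\cdot S_t}]=\exp\{c_0t\sum_j(\cosh\lambda_j-1)\}$; optimizing the resulting $\exp\{-s|x|+C'te^s\}$ at $e^s\asymp|x|/t$ gives the Poissonian bound, the factor $1\vee\log(|x|/t)$ absorbing the crossover region $|x|\asymp t$. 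The far-field lower bound ($|x|>t$) is the easy part: letting $N_t\sim\mathrm{Poisson}(c_0t)$ be the number of jumps, one forces $S_t=x$ through a chain of about $|x|$ jumps; since $\P(N_t=m)\ge\exp\{-cm(1\vee\log(m/t))\}$ for $m\ge t$ and any fixed admissible displacement sequence has probability at least $(2d)^{-m}$, a short count yields $p_t(0,x)\ge\exp\{-c|x|(1\vee\log(|x|/t))\}$.

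This leaves the near-diagonal lower bound $p_t(0,x)\ge c_1t^{-d/2}e^{-c_2|x|^2/t}$ for $|x|\le t$, which I expect to be the main obstacle. The plan is a steepest-descent analysis of the shifted integral: pick $\theta^\star$, with $|\theta^\star|\asymp|x|/t\le1$, so that the linear part of $k\mapsto t\psi(k+i\theta^\star)-ik\cdot x$ at $k=0$ vanishes; then for $|k|\lesssim t^{-1/2}$ the integrand is $\exp\{-c|x|^2/t-\tfrac{c_0t}2k^\top Dk+O(t|k|^3)\}$ with $D$ positive definite, so that piece contributes a quantity of fixed sign and size $\asymp t^{-d/2}e^{-c|x|^2/t}$, while the rest of the torus is exponentially smaller because $\mathrm{Re}\,\psi(k+i\theta^\star)$ is there bounded away from $\psi(i\theta^\star)$. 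The delicate issue is uniformity over the whole range $|x|\le t$ --- in effect a moderate/large-deviation local limit theorem --- which requires quantitative control of the cubic remainder of the expansion about $i\theta^\star$ and of the torus tail, uniformly in $x$. A cleaner alternative, and the one underlying the cited references \cite{DD05}, is to avoid Fourier analysis altogether: verify that the nearest-neighbor graph $\Z^d$ enjoys volume doubling and the scaled Poincar\'e inequalities, and invoke the equivalence (Delmotte) of those with the parabolic Harnack inequality and with the full two-sided Gaussian estimate. I would stress that a plain Chapman--Kolmogorov chaining along a geodesic does \emph{not} suffice for the sharp lower bound, since it loses a spurious factor $\log t$ in the exponent precisely when $1\ll|x|^2/t\ll t$.
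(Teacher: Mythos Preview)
The paper does not actually prove this lemma: it is quoted as a known fact with a reference to \cite{DD05}, Propositions~4.2 and~4.3, and nothing more. So there is no ``paper's own proof'' to compare against, and your proposal already goes much further than the paper does.

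Your sketch is sound. The complex-shift/Fourier argument gives the Gaussian upper bound for $|x|\le t$ after optimizing $|\theta|\asymp|x|/t$; the exponential Chebyshev bound with the explicit moment generating function $E_0[e^{\lambda\cdot S_t}]=\exp\{c_0t\sum_j(\cosh\lambda_j-1)\}$ gives the Poissonian upper bound for $|x|>t$; and the forced-path count via $\P(N_t=m)$ with $m=|x|_1$ gives the matching far-field lower bound. For the near-diagonal lower bound you correctly identify the two standard routes: a uniform saddle-point expansion of the shifted integral, or the equivalence (volume doubling $+$ Poincar\'e $\Leftrightarrow$ parabolic Harnack $\Leftrightarrow$ two-sided Gaussian bounds). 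The second of these is precisely what underlies the cited reference \cite{DD05}, so your ``cleaner alternative'' is in fact the route the paper implicitly invokes.

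One small caveat on your closing remark: the claim that Chapman--Kolmogorov chaining necessarily loses a $\log t$ factor is too strong as stated. Chaining through balls of radius $\asymp\sqrt{t/n}$ with $n\asymp|x|^2/t$ does recover the sharp Gaussian lower bound, \emph{provided} one already has the near-diagonal estimate $p_s(y,z)\ge cs^{-d/2}$ for $|y-z|\le c\sqrt{s}$ as input; what is true is that this input is not for free and typically comes from Harnack or an equivalent device. This is a side comment and does not affect the correctness of your main argument.
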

}
\section{Proofs for the random walk in random scenery}
Both for the lower and upper bounds, we shall restrict our attention 
to the event 
\begin{equation}
 \set{\max_{0\le u\le t}|S_u|\le t^\mu}
\label{restriction}
\end{equation}
for a certain $\mu\ge \frac12$. Note that $\P$-almost surely,
\begin{equation}
 \max_{|x|\le t^\mu}z(x)= t^{\frac{d\mu}{\alpha}+o(1)}
\label{extreme}
\end{equation}
and the point of maximum $x_{\max{}}$ of $z$ within $|x|\le t^\mu$
satisfies $|x_{\max{}}|= t^{\mu+o(1)}$ as $t\to\infty$.
\subsection{Lower bounds of Theorem~\ref{RWRS}}
{We prove the lower bounds for the probability with an additional 
terminal constraint 
\begin{equation}
 P_0(A_t\ge t^\rho, S_t=0)\ge \exp\left\{-t^{p(\alpha,\rho)+o(1)}\right\},
\label{bridge}
\end{equation}
with the right hand side replaced by a negative power of $t$ when 
{$p(\alpha,\rho)=0$, i.e.,}~$\rho<\frac{\alpha+1}{2\alpha}\vee 1$ ($d=1$) or
$\rho<\frac{d}{2\alpha}\vee 1$ ($d\ge 2$). 
We will use these bounds in the proof of {Theorems}~\ref{exp} and~\ref{alg}.}
The basic strategy for the lower bound is simple. We let the random
walk go to $x_{\max{}}$ within time $\frac{t}{{4}}$, 
leave the right amount of local time there {by the time $\frac{t}2$ and 
then come back to the origin}. 
In the second regime where
$\rho>\frac{\alpha+d}{\alpha}$, the random walk stays
at $x_{\max{}}$ all the time {in $[\frac{t}4,\frac{t}2]$} and its probability is 
easy to evaluate.
But in the other regime, the optimal strategy is to leave the local time
much smaller than $\frac{t}{{4}}$ and we need a moderate deviation estimate
for the local time in~\cite{Che01}. Throughout the proof of the lower
bound, we assume~\eqref{extreme}.\\

\noindent\underline{\textsc{The second regime
$\rho>\frac{\alpha+d}{\alpha}$}}: 
In this case, we choose $\mu=\frac{\alpha(\rho-1)}{d}+\epsilon> 1$ 
so that $\rho<\frac{d\mu}{\alpha}+1$.
Then the above strategy of the random walk yields 
$A_t\ge\frac{1}{{4}}t^{\frac{d\mu}{\alpha}+1}>t^\rho$. 
By using {the Markov property and Lemma~\ref{RWHK}, we have 
\begin{equation}
\begin{split}
 & P_0\left(A_t\ge t^\rho, S_t=0\right)\\
 &\quad\ge P_0\left(S_{\frac{t}{{4}}}=x_{\max{}}\right)
 P_{x_{\max{}}}\left(S_u=x_{\max{}}\textrm{ for any }u\in
{\left[0,\frac{t}4\right]} \right)\\
 &\quad\qquad  \times P_{x_{\max{}}}\left(S_\frac{t}{2}=0\right)\\
 &\quad\ge \exp\left\{-t^{\mu+\epsilon}-\frac{t}4\right\}
\end{split}
\end{equation}}
{for sufficiently large $t$,} which is the desired bound. \\

\noindent\underline{\textsc{The first regime
$\rho\in \left(\left(\frac{\alpha+1}{2\alpha}1_{d=1}+
\frac{d}{2\alpha}1_{d\ge 2}\right)\vee 1,\frac{\alpha+d}{\alpha}\right]$}}: 
In this case, the right hand side of \eqref{rwrs} is sub-exponential 
and hence we must choose $\mu\le 1$. {We introduce a stopping time
\begin{equation}
 \zeta_t=\inf\set{u\ge 0\colon \ell_u(x_{\max{}})
\ge \frac{t^\rho}{z(x_{\max{}})}}
\end{equation}
where $\ell$ denotes the occupation time of the 
random walk. Note that $S_{\zeta_t}=x_{\max{}}$ almost surely.
Then, the strong Markov property and Lemma~\ref{RWHK} show
\begin{equation}
\begin{split}
& P_0\left(A_t\ge t^\rho,S_t=0\right)\\
&\quad \ge P_0\left(S_{\frac{t}{{4}}}=x_{\max{}}\right)
 E_{x_{\max{}}}\left[p_{\frac{3t}4-\zeta_t}(x_{\max{}},0) 
1_{\{\zeta_t\le {\frac{t}{4}}\}}\right]\\
&\quad \ge \exp\left\{-t^{2\mu-1+\epsilon}\right\}
P_0\left(\ell_{\frac{t}{{4}}}(0)\ge t^{\rho-\frac{d\mu}{\alpha}+\epsilon}\right)
\end{split}
\end{equation}}
{for sufficiently large $t$.}
In order to bound the second factor, we employ the following result:
\begin{theorem}[Theorem~1 in~\cite{Che01}]
\label{Chen}
Suppose $f$ is a nonnegative bounded function with a finite support 
containing the origin and
\begin{equation}
{\lim_{t\to\infty}\sup_{x\in\supp f}\left|\frac1{a(t)} E_x\left[\int_0^t f(S_u){\rm d}u\right]-1\right|=0.}
\end{equation}
Let ${1}\ll b(t)\ll t$. Then for $\lambda>0$, 
\begin{equation}
\limsup_{t\to\infty} \frac{1}{b(t)}\log
P_0\left(\int_0^tf(S_u){\rm d}u\ge \lambda a\left(\frac{t}{b(t)}\right)b(t)\right)
\le -1-\log \frac{\lambda}{4}
\end{equation}
and for $\lambda\in(0,1)$,
\begin{equation}
\liminf_{t\to\infty} \frac{1}{b(t)}\log
P_0\left(\int_0^tf(S_u){\rm d}u\ge \lambda a\left(\frac{t}{b(t)}\right)b(t)\right)
\ge -\log \frac{1+\lambda}{1-\lambda}.
\end{equation}
\end{theorem}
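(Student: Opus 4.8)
The plan is to prove the upper and lower bounds separately, in both cases cutting the interval $[0,t]$ into $b(t)$ consecutive blocks of length $s=s(t):=t/b(t)$ (note $s\to\infty$ as $t\to\infty$, since $b(t)\ll t$) and working with the block contributions $L_i:=\int_{(i-1)s}^{is}f(S_u)\,\d u$, so that $L:=\int_0^t f(S_u)\,\d u=\sum_{i=1}^{b(t)}L_i$ and the threshold $\lambda\,a(s)\,b(t)$ reads ``$\lambda$ times the typical contribution of a block, summed over the $b(t)$ blocks''. The first thing I would record is that the hypothesis upgrades to $\sup_{x\in\Z^d}E_x[\int_0^s f(S_u)\,\d u]=a(s)(1+o(1))$ as $t\to\infty$: for $x\notin\supp f$ the walk must first hit $\supp f$, which only lowers the expectation, and on $\supp f$ this is the assumption applied at time $s\to\infty$.

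For the upper bound I would combine an exponential Chebyshev inequality with Khasminskii's lemma applied block by block. For $\theta>0$ with $\theta a(s)(1+o(1))<1$, Khasminskii's lemma gives $\sup_x E_x[\exp\{\theta\int_0^s f(S_u)\,\d u\}]\le(1-\theta a(s)(1+o(1)))^{-1}$, and iterating this over the $b(t)$ blocks with the Markov property yields $E_0[e^{\theta L}]\le(1-\theta a(s)(1+o(1)))^{-b(t)}$. Writing $r:=\theta a(s)\in[0,1)$, this gives
\[
\frac{1}{b(t)}\log P_0\bigl(L\ge\lambda a(s)b(t)\bigr)\le-\bigl(r\lambda+\log(1-r)\bigr)+o(1),
\]
and optimizing $r\mapsto r\lambda+\log(1-r)$ over $r\in[0,1)$ produces the exponent $\lambda-1-\log\lambda$ when $\lambda\ge 1$ and $0$ when $\lambda<1$. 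For $\lambda<1$ the asserted bound $-1-\log(\lambda/4)$ is positive and there is nothing to prove; for $\lambda\ge1$ one has $\lambda-1-\log\lambda\ge 1+\log(\lambda/4)$ (an elementary convexity estimate: the difference $\lambda-2\log\lambda+\log4-2$ is minimized at $\lambda=2$, where it vanishes), which is exactly the claim.

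For the lower bound I would exhibit an explicit favourable event. Fix $x_0\in\supp f$; the idea is to force the walk, on a suitable fraction of the $b(t)$ blocks, to visit $x_0$ and to accumulate there at least the right multiple of $a(s)$, the walk being left free on the remaining blocks. The Markov property at the block boundaries then reduces $P_0(L\ge\lambda a(s)b(t))$ to a product of $\Theta(b(t))$ single-block probabilities, and each of these is bounded below by a second-moment (Paley--Zygmund) argument: from $E_{x_0}[\int_0^s f]=a(s)(1+o(1))$ together with $E_{x_0}[(\int_0^s f)^2]\le 2a(s)^2(1+o(1))$ (the same iteration as above, now at the level of second moments) one gets $P_{x_0}(\int_0^s f\ge\mu a(s))\ge\tfrac12(1-\mu)^2(1+o(1))$ for every $\mu\in(0,1)$, while the local central limit theorem controls the cost of the positioning constraints. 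Collecting the per-block factors and optimizing over $\mu$ and over the fraction of ``productive'' blocks should produce the constant $-\log\frac{1+\lambda}{1-\lambda}$.

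The delicate part is the lower bound. The favourable event has to be organized so that the positioning constraints contribute only a factor of order $b(t)$: in particular one cannot afford to require the walk to return to $x_0$ at every block boundary, since $p_s(x_0,x_0)=s^{-d/2+o(1)}$ would contribute a factor of order $b(t)\log s\gg b(t)$. Arranging the excursions so as to keep this cost at the right order, and then carrying the bookkeeping far enough to recover the precise constant $-\log\frac{1+\lambda}{1-\lambda}$ rather than some lattice-dependent quantity, is where the real work lies; throughout, the uniformity in the hypothesis on $\tfrac1{a(t)}E_x[\int_0^t f]$ must be propagated through every block estimate. The upper bound, by comparison, is the routine Khasminskii/Feynman--Kac computation sketched above.
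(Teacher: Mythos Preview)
The paper itself only proves the upper bound (Appendix~A); the lower bound is cited from~\cite{Che01} without proof.

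For the upper bound, your approach is correct and runs parallel to the paper's. Both decompose $[0,t]$ into $\lfloor b(t)\rfloor$ blocks of length $s=t/\lfloor b(t)\rfloor$ and both rest on the Khasminskii-type moment inequality $\sup_x E_x[(\int_0^s f)^m]\le m!\,a(s)^m$ (Lemma~\ref{MR}). The difference is only in how this is packaged. The paper takes the $\lfloor b(t)\rfloor$-th moment of $L=\sum_i I_i$, expands via the multinomial theorem, bounds each term by the Markov property and Lemma~\ref{MR}, counts the $\binom{2\lfloor b(t)\rfloor}{\lfloor b(t)\rfloor}$ weak compositions, and applies Stirling; this gives exactly the rate $1+\log(\lambda/4)$. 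You instead sum the same per-block moment bounds into Khasminskii's exponential inequality $E_x[e^{\theta I_1}]\le(1-\theta a(s))^{-1}$, iterate over blocks by the Markov property, and apply exponential Chebyshev with optimized $\theta$. This is slightly cleaner and yields the sharper rate $\lambda-1-\log\lambda$ (with equality at $\lambda=2$, as you note), though the improvement is immaterial for the applications in the paper.

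For the lower bound your proposal remains a sketch. You correctly identify the strategy (force accumulation on a fraction of the blocks) and the main obstruction (requiring $S$ to return to $x_0$ at each block boundary costs $b(t)\log s\gg b(t)$), but you do not resolve it, nor do you show that Paley--Zygmund together with an optimized block fraction would actually produce the constant $\log\frac{1+\lambda}{1-\lambda}$ rather than some weaker one. Since the paper does not prove this half either there is nothing to compare against, but as written your lower bound argument is incomplete.
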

\begin{remark}
\label{Chen-ver}
An inspection of the proof in~\cite{Che01} shows that the upper bound
holds even for functions $f_t$ depending on $t$ provided 
that it is non-negative and satisfies
\begin{equation}
\limsup_{t\to\infty}\frac{1}{a\bigl(\frac{t}{b(t)}\bigr)} 
\sup_{x\in{\rm supp}f_t}
E_x\left[\int_0^{\frac{t}{b(t)}} f_t(S_u){\rm d}u\right]\le 1. 
\end{equation}
This version will be used in the proof of the upper bound. 
See \ref{Chen2} for a proof. 
\end{remark}
We apply this to $f=1_{\{0\}}$ so that $\int_0^t f(S_u){\rm d}u=\ell_t(0)$
and 
\begin{equation}
 a(t)=
\begin{cases}
 c \sqrt{t},&d=1,\\
 t^{o(1)},&d\ge 2.
\end{cases}
\end{equation}
The choice 
\begin{equation}
 b(t)=
\begin{cases}
t^{2\rho-\frac{2\mu}{\alpha}-1+2\epsilon},&d=1,\\
t^{\rho-\frac{d\mu}{\alpha}+2\epsilon},&d\ge 2
\end{cases}
\end{equation}
makes
\begin{equation}
a\left(\frac{t}{b(t)}\right)b(t)=
\begin{cases}
c\sqrt{tb(t)}=ct^{\rho-\frac{\mu}{\alpha}+\epsilon},&d=1,\\
t^{o(1)}b(t)^{1+o(1)}\le t^{\rho-\frac{d\mu}{\alpha}+\epsilon},&d\ge 2
\end{cases}
\end{equation}
for sufficiently large $t$ and hence we get, with $\lambda=\frac12$ (say),
\begin{equation}
P_0\left(\ell_t(0)\ge \frac{c}{2}t^{\rho-\frac{{d}\mu}{\alpha}+\epsilon}\right)
 \ge 
\begin{cases}
\exp\set{-ct^{2\rho-\frac{2\mu}{\alpha}-1+2\epsilon}},&d=1,\\
\exp\set{-ct^{\rho-\frac{d\mu}{\alpha}+2\epsilon}},&d\ge 2,\\
\end{cases}
\end{equation}
By doing the trivial change of variable $t\mapsto {\frac{t}4}$ and 
slightly changing $\epsilon$, we obtain 
\begin{equation}
 P_0\left(A_t\ge t^\rho, {S_t=0}\right)
\ge 
\begin{cases}
\exp\left\{-t^{2\mu-1+\epsilon}-
 t^{2\rho-\frac{2\mu}{\alpha}-1+2\epsilon}\right\}&d=1,\\
\exp\left\{-t^{2\mu-1+\epsilon}-
 t^{\rho-\frac{d\mu}{\alpha}+2\epsilon}\right\}&d\ge 2. 
\end{cases}
\end{equation}
Optimizing over $\mu$, that is attained at 
\begin{equation}
 \mu=
\begin{cases}
\frac{\alpha\rho}{\alpha+1},&d=1,\\[5pt]
\frac{\alpha(\rho+1)}{2\alpha+d},&d\ge 2, 
\end{cases}
\label{mu-opt}
\end{equation}
{which are in $(\frac12,1]$ in this regime,} 
we find the desired lower bound. \\

\noindent\underline{\textsc{The third regime
$\rho<\left(\frac{\alpha+1}{2\alpha}1_{d=1}+\frac{d}{2\alpha}1_{d\ge 2}\right)\vee 1$}}: {We first give the proof for $d=1$.} Let us start with the case
$\alpha>1$ and $\rho<1$. In this case, $P_0(A_{{\frac{t}2}}>t^\rho)$ 
converges to one by the law of large numbers~\eqref{LLN}. 
{Thus for sufficiently large $M>0$, we have
$P_0\left(A_{\frac{t}2}>t^\rho, S_{\frac{t}2}<Mt^{\frac12}\right)\ge \frac12$.
By using the Markov property and the local central limit theorem, we conclude 
that 
\begin{equation}
\begin{split}
&  P_0\left(A_t>t^\rho,  S_t=0\right)\\
&\quad \ge 
 P_0\left(A_{\frac{t}2}>t^\rho, 
 S_{\frac{t}2}<Mt^{\frac12}, S_t=0\right)\\
&\quad \ge t^{-\frac12-\epsilon} 
\end{split}
\end{equation}}
for any $\epsilon>0$. 

Next, for $\alpha\le 1$ and 
$\rho<\frac{\alpha+1}{2\alpha}$, we let the random walk go to the 
highest point of $z$-field in $[-t^{\frac12},t^{\frac12}]$ within time 
${\frac{t}4}$, leave the local time larger than $t^{\frac12}$ there 
{by the time $\frac{t}2$ and come back to the origin at $t$}. 
Since the above highest value of $z$ is $t^{\frac1{2\alpha}+o(1)}$ $\P$-almost
surely, we have $A_t\ge t^{\frac{1}{2}+\frac1{2\alpha}+o(1)}$. 
On the other hand, the probability of this event is bounded from below by a 
power of $t$ by the local central limit theorem for $S_t$ and the central 
limit theorem for $\ell_t(0)$ in Darling--Kac~\cite{DK57}.

{In higher dimensions, if $\alpha>\frac{d}2$ and $\rho<1$ 
one can follow the same
strategy as above. When $\alpha\le\frac{d}2$ and $\rho<\frac{d}{2\alpha}$, 
note that the highest point of $z$-field in $[-t^{\frac12},t^{\frac12}]^d$
is $t^{\frac{d}{2\alpha}+o(1)}\gg t^\rho$ $\P$-almost surely. 
Thus it suffices to let the random walk stay there for the unit time 
instead of $t^{\frac12}$. The rest of the argument is the same. 
}
\subsection{Upper bound of Theorem~\ref{RWRS}: the second regime}
\label{spec}
In the second regime, that is, when $\rho> \frac{\alpha+d}{\alpha}$, the proof
of the upper bound is very simple. We set
\begin{equation}
\mu=
\frac{\alpha(\rho-1)}{d}-\epsilon>1
\end{equation}
for small $\epsilon>0$ so that
\begin{equation}
\max_{|x|\le t^\mu}z(x)=t^{\frac{d\mu}{\alpha}+o(1)}<t^{\rho-1}
\end{equation}
as $t$ tends to infinity, $\P$-almost surely. This implies that 
$\{\max_{0\le u\le t}|S_u|\le t^\mu\}\cap\{A_t\ge t^\rho\}=\emptyset$ 
(up to a $\P$-null set)
for sufficiently large $t$. Therefore, 
\begin{equation}
\begin{split}
 P_0\left(\int_0^tz(S_u){\rm d}u\ge t^\rho\right)
 &\le P_0\left(\max_{0\le u\le t}|S_u|> t^\mu\right)\\
 &\le \exp\set{-t^{\mu+o(1)}}, 
\end{split}
\end{equation}
where we have used Lemma~\ref{RWHK} and the reflection principle. 

\subsection{Upper bound of Theorem~\ref{RWRS}: the first regime}
In the proof of the lower bound, we concentrate on the contribution from 
the highest peak of $z$-field. 
What remains to show is that the contributions from lower values of
$z$ are negligible --- more precisely, it is much harder for the
random walk to leave the right amount of occupation time on lower
level sets of $z$. 
Throughout this subsection, we fix
\begin{equation}
\mu= 
\begin{cases}
\frac{\alpha\rho}{\alpha+1},
&d=1\textrm{ and }\rho\in\left[\frac{\alpha+1}{2\alpha}\vee 1,
\frac{\alpha+1}{\alpha}\right],\\[5pt]
\frac{\alpha(\rho+1)}{2\alpha+d},
&d\ge 2\textrm{ and }\rho\in\left[\frac{d}{2\alpha}\vee 1,
\frac{\alpha+d}{\alpha}\right],
\end{cases}
\label{mu-1}
\end{equation}
which are the optimal values found in the proofs of lower bounds
(cf.~\eqref{mu-opt}). 
In view of~\eqref{extreme}, we may replace $z$ by 
\begin{equation}
 \tilde{z}(x)=\left(z(x)\wedge t^{\frac{d\mu}{\alpha}+\epsilon}\right)
1_{\{|x|\le t^\mu\}}
\end{equation}
for a small fixed $\epsilon>0$.

We define the level sets of $\tilde z$-field by
$\mathcal{H}_0=\Z^d$ and
\begin{equation}
 \mathcal{H}_k={\mathcal{H}_k(t)=}
\set{x\in\Z^d\colon \tilde z(x)\ge t^{k\epsilon}}
\quad \textrm{for }k\ge 1.
\end{equation}
Note that $\mathcal{H}_k=\emptyset$ if 
$k> K:={[\frac{d\mu}{\epsilon\alpha}]}$. 
Our starting point is the following obvious bound: 
\begin{equation}
\begin{split}
  P_0\left(A_t\ge t^\rho\right)
&\le \sum_{k=0}^K
 P_0\left(\ell_t(\mathcal{H}_k\setminus \mathcal{H}_{k+1})
 \ge \frac{1}{K+1}t^{\rho-(k+1)\epsilon}\right)\\
&\quad +P_0\left(\max_{0\le u\le t}|S_u| > t^\mu\right).
 \label{slicing}
\end{split}
\end{equation}
Thanks to {Lemma~\ref{RWHK}}, we have
\begin{equation}
P_0\left(\max_{0\le u\le t}|S_u| > t^\mu\right)=
  \exp\set{-t^{2\mu-1+o(1)}}
\end{equation}
(recall $\mu\le 1$) and hence the second term on the right hand side is 
harmless.

We bound each summand in the first term by applying the version of 
Theorem~\ref{Chen} mentioned in Remark~\ref{Chen-ver} to 
$f_t=1_{\mathcal{H}_k}$ {(see also \ref{Chen2})}. 
The input required in the upper bound is the asymptotics of $a(\frac{t}{b_t})$. 
(Note that when $f$ depends on $t$, we cannot relate this to that of $a(t)$.)
Therefore we start by studying
\begin{equation}
E_x\left[\int_0^{t^\eta}f(S_u){\rm d}u\right]=E_x[\ell_{t^\eta}(\mathcal{H}_k)]
\end{equation}
for $\eta>0$.
\begin{lemma}
\label{lem1}
For any $\eta>0$ and $d\ge 1$, there exists a positive constant $c$ such that 
the $\P$-probability of  
\begin{equation}
 \sup_{|x|\le t^\mu}E_x[\ell_{t^\eta}(\mathcal{H}_k)]
\le c
\begin{cases}
t^{\frac{\eta}2}(\log t)^3,&d=1\textrm{ and } 
  k\epsilon>\frac{\eta}{2\alpha}, \\[5pt]
 t^\epsilon,&d\ge 2\textrm{ and } k\epsilon>\frac{\eta}{\alpha}
\end{cases}
\end{equation}
is bounded {from below by $1-c\exp\{-c^{-1}(\log t)^2\}$.} 
In particular, the above holds for all sufficiently large $t$, 
$\P$-almost surely. 
\end{lemma}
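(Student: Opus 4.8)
The plan is to start from the occupation-time formula
$E_x[\ell_{t^\eta}(\mathcal{H}_k)]=\sum_{y\in\mathcal{H}_k}\int_0^{t^\eta}p_u(x,y)\,{\rm d}u$ (Fubini) and to estimate the time integrals with Lemma~\ref{RWHK}. Writing $R=t^{\eta/2}\log t$, I would split the sum at $|y-x|=R$. The ``far'' part is deterministic: when $|y-x|>R$ we have $|y-x|^2/u\ge R^2/t^\eta=(\log t)^2$ for every $u\le t^\eta$, so Lemma~\ref{RWHK} (plus the trivial fact that reaching $y$ by a time $u<1$ requires at least $|y-x|$ jumps, to cover $u<1$) gives $\int_0^{t^\eta}p_u(x,y)\,{\rm d}u\le\exp\{-c(\log t)^2\}$; since $\tilde z$, and hence $\mathcal{H}_k$, is supported in $\{|y|\le t^\mu\}$, there are at most $(2t^\mu+1)^d$ such $y$, so the far part is at most $\exp\{-c(\log t)^2\}$ for large $t$ --- negligible against every bound asserted in the lemma.

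For the ``near'' part in dimensions $d=1,2$ I would bound each integral uniformly in $x,y$ using Lemma~\ref{RWHK}, which yields $\int_0^{t^\eta}p_u(x,y)\,{\rm d}u\le\Phi_d(t^\eta)$ with $\Phi_1(T)=O(T^{1/2})$ and $\Phi_2(T)=O(\log T)$ (the contributions of $u<1$ and of $u<|x-y|$ being $O(1)$). Thus the near part is at most $\Phi_d(t^\eta)\cdot|\mathcal{H}_k\cap B(x,R)|$, and it remains to control $|\mathcal{H}_k\cap B(x,R)|$ uniformly over $|x|\le t^\mu$. For fixed $x$ this is a sum of $O(R^d)$ i.i.d.\ Bernoulli variables of parameter $\P(z(0)\ge t^{k\epsilon})=t^{-k\epsilon\alpha+o(1)}$, the $o(1)$ being uniform over the bounded range $k\in\{1,\dots,K\}$ because $t^{k\epsilon}\to\infty$. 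Its mean is then $t^{\eta/2-k\epsilon\alpha+o(1)}\log t$ ($d=1$) or $t^{\eta-k\epsilon\alpha+o(1)}(\log t)^2$ ($d=2$), and the hypotheses $k\epsilon>\frac{\eta}{2\alpha}$, respectively $k\epsilon>\frac{\eta}{\alpha}$, are exactly what forces this to $0$. A Chernoff bound then gives $\P(|\mathcal{H}_k\cap B(x,R)|\ge(\log t)^2)\le\exp\{-c(\log t)^3\}$, and a union bound over the $O(t^{d\mu})$ values of $x$ and the $O(1)$ values of $k$ shows that, off an event of probability at most $\exp\{-c^{-1}(\log t)^2\}$, one has $|\mathcal{H}_k\cap B(x,R)|\le(\log t)^2$ for all admissible $x,k$. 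On that event the near part is $O((\log t)^2\Phi_d(t^\eta))$, hence $\le ct^{\eta/2}(\log t)^3$ for $d=1$ and $\le t^\epsilon$ for $d=2$.

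The case $d\ge3$ requires a genuinely different argument, and this is the main obstacle: bounding each $\int_0^{t^\eta}p_u(x,y)\,{\rm d}u$ by a constant and summing over the $|\mathcal{H}_k\cap B(x,R)|$ nearby sites is too wasteful, since that count is not small when the walk is transient. Instead I would use $\int_0^{t^\eta}p_u(x,y)\,{\rm d}u\le G(x,y)\le C(1+|x-y|)^{2-d}$ (this bound on the transient Green function follows by integrating Lemma~\ref{RWHK}), decompose $B(x,R)$ into dyadic annuli $A_j(x)=\{2^{j-1}<|y-x|\le2^j\}$, $1\le j\le\log_2 R$, and write the near part as $\le G(x,x)+C\sum_j2^{j(2-d)}|\mathcal{H}_k\cap A_j(x)|$. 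Fixing $\epsilon'\in(0,k\epsilon\alpha-\eta)$ (possible because $k\epsilon>\frac{\eta}{\alpha}$), a Chernoff estimate combined with a union bound over the polynomially many triples $(x,j,k)$ gives, off an event of probability at most $\exp\{-c^{-1}(\log t)^2\}$, that $|\mathcal{H}_k\cap A_j(x)|\le(\log t)^3+2^{jd}t^{-k\epsilon\alpha+\epsilon'}$ for all admissible $x,j,k$. Feeding this in and using $\sum_{j\ge1}2^{j(2-d)}<\infty$ and $\sum_{j\le\log_2 R}2^{2j}=O(R^2)=O(t^\eta(\log t)^2)$, the near part is $O((\log t)^3)+O(t^{\eta-k\epsilon\alpha+\epsilon'}(\log t)^2)$, which is $O((\log t)^3)\le t^\epsilon$ for large $t$.

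Combining the far and near contributions proves the stated high-probability estimate for each $t$, and the $\P$-almost sure version follows from the Borel--Cantelli lemma along $t\in\N$ (with a monotonicity argument for non-integer $t$), since $\sum_n\exp\{-c^{-1}(\log n)^2\}<\infty$. Beyond the transience issue in $d\ge3$, the only point that needs care is the uniformity over the polynomially many base points $x$: this is what forces the per-$x$ counting bounds to hold with only super-polynomially small failure probability, and hence the use of the slightly generous polylogarithmic thresholds above --- harmless because the target bounds themselves carry a factor $(\log t)^3$, respectively $t^\epsilon$, of slack.
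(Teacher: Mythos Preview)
Your argument is correct and arrives at the same conclusion, but by a genuinely different route than the paper. The paper views $E_x[\ell_{t^\eta}(\mathcal{H}_k)]$ as a linear functional of the i.i.d.\ indicators $\{1_{\{z(y)\ge t^{k\epsilon}\}}\}$, computes its mean, and then proves concentration via McDiarmid's inequality; because the indicators degenerate as $t\to\infty$, the paper groups sites into cubes $I_j$ of side $t^{\alpha k\epsilon/d}$, introduces a \emph{trimmed} version $\underline{\mathcal{H}}_k$ retaining at most $(\log t)^2$ points per cube, and bounds the cube-wise influences $\Delta_j$ using the heat-kernel decay. You bypass the concentration step entirely: after the deterministic far/near split at radius $R=t^{\eta/2}\log t$, you bound the near contribution by $\Phi_d(t^\eta)$ (or, in $d\ge 3$, by the Green-function decay on dyadic annuli) times a point count, and control the counts $|\mathcal{H}_k\cap B(x,R)|$ or $|\mathcal{H}_k\cap A_j(x)|$ directly by Chernoff plus a union bound. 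This is more elementary --- no bounded-differences inequality, no trimming trick --- and makes transparent that the hypothesis $k\epsilon>\frac{\eta}{2\alpha}$ (resp.\ $\frac{\eta}{\alpha}$) is exactly what forces the expected number of $\mathcal{H}_k$-points within the walk's effective range to vanish. The paper's approach, on the other hand, yields concentration around the actual mean rather than an \emph{a priori} upper bound, which is a slightly stronger intermediate statement and would adapt more readily to sceneries with dependence, where crude counting might fail but a martingale-difference method still applies.
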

\begin{proof}
Let us first explain the outline of the proof. It is straightforward to check
that the mean $\E E_x[\ell_{t^\eta}(\mathcal{H}_k)]$ satisfies the desired
bound. Then we show a concentration around the mean, which should not be hard
since $E_x[\ell_{t^\eta}(\mathcal{H}_k)]$ is a linear functional of 
i.i.d.~random variables 
{$\{1_{\{\tilde{z}(y)\ge t^{k\epsilon}\}}\}_{y\in\Z^d}$}. 
However, if we apply the martingale difference method directly
to this family, then it turns out that we have too many terms. 
It is necessary (and natural) to take into account that these random 
variables degenerate as $t\to\infty$. To this end, we shall divide 
$[-t^\mu,t^\mu]^d$ into smaller cubes $\{I_j\}_{j\ge 1}$ in such a way that 
$\P(\#I_j\cap \mathcal{H}_k\ge 1)$ is bounded away from 0 and 1. 
Then we use a martingale difference type method (McDiarmid's inequality, 
Theorem~6.5 in~\cite{BLM13}) to $\{I_j\cap \mathcal{H}_k\}_{j\ge 1}$. 
That $\#I_j\cap \mathcal{H}_k$ is unbounded causes a little problem
but a simple truncation argument resolves it. This is the role of
\emph{trimmed sets} defined below.

Now we start the proof. We prove the lemma for the special case
\begin{equation}
 \P(z(x)> r)=r^{-\alpha}\wedge 1
\end{equation}
and indicate how to deal with the general case in the proof. 
For any $x\in \Z^d$, 
\begin{equation}
\begin{split}
\E E_x[\ell_{t^\eta}(\mathcal{H}_k)]
&=\int_0^{t^\eta} \sum_{y\in \Z^d} p_u(x,y)\P(\tilde{z}(y)\ge t^{k\epsilon}){\rm d}u\\
&\le\int_0^{t^\eta} \sum_{y\in \Z^d} p_u(x,y)t^{-\alpha k\epsilon}{\rm d}u\\
&\le c
\begin{cases}
t^{\frac\eta2},&d=1,\\
t^{\epsilon},&d\ge 2,
\end{cases}
\label{mean}
\end{split}
\end{equation}  
where we have used the assumptions on $k\epsilon$. 

Next we show a concentration of $E_x[\ell_{t^\eta}(\mathcal{H}_k)]$ 
around its mean. Let $I_j$ be the cube 
$t^{\frac{\alpha k \epsilon}{d}}(j+[0, \frac12)^d)$ ($j\in\frac12\Z^d$) and 
define a \emph{trimmed} set $\underline{\mathcal{H}}_k\subset \mathcal{H}_k$
by keeping only at most $(\log t)^2$ points in each $I_j$. {(The way how to 
choose these points does not matter in what follows.)}
It is simple to check that
\begin{equation}
 \begin{split}
\P(\underline{\mathcal{H}}_k\neq \mathcal{H}_k)
&\le \sum_{j\in \frac{1}{2}\Z^d}\P(\#(\mathcal{H}_k\cap I_j)> (\log t)^2)\\
&\le 2^{-{(\log t)^2}}\#\set{j\in\frac12\Z^d: I_j
\cap [-t^\mu, t^\mu]^{d}
\neq \emptyset}\\
&\le 2^{-{\frac{1}{2}(\log t)^2}} 
\label{no-trim}
 \end{split}
\end{equation}
by using the bound
\begin{equation}
\begin{split}
 &\P\left(\#(\mathcal{H}_k\cap I_j)\ge (\log t)^2\right)\\
 &\quad{\le} \binom{|I_j|}{(\log t)^2}\P\left(z(x)>t^{k\epsilon}
  \right)^{{(\log t)^2}}\\
 &\quad\le \left(|I_j|t^{-\alpha k \epsilon}\right)^{(\log t)^2}\\
&\quad=2^{-d(\log t)^2}.
\end{split}
\end{equation}
\begin{remark}
In order to deal with a general distribution of the form~\eqref{ass-tail}, 
we modify the side-length $L_t$ of $I_j$ as 
\begin{equation}
 L_t^d\P\left(z(x)>t^{k\epsilon}\right)\sim\frac{1}{2}\textrm{ as }t\to\infty.
\end{equation}
In the rest of the proof, we replace $t^{\frac{\alpha k\epsilon}{d}}$ by
$L_t$ which is still $t^{\frac{\alpha k\epsilon}{d}+o(1)}$.
\end{remark}
We are going to apply McDiarmid's inequality (Theorem~6.5 in~\cite{BLM13})
to $E_x[\ell_{t^\eta}(\underline{\mathcal{H}}_k)]$, regarding it as a 
function of independent random variables 
$\{I_j\cap \underline{\mathcal{H}}_k\}_{{j\in \frac{1}{2}\Z^d}}$. 
Let $x=0$ for simplicity and define $\mathbb{H}_k$ as the set of all 
possible realizations of $\underline{\mathcal{H}}_k$. 
Then the influence caused by changing the configuration in $I_j$ 
is bounded as
\begin{equation}
\begin{split}
\Delta_j&=\sup_{{\underline{\mathcal{H}}_k,\underline{\mathcal{H}}_k'\in
\mathbb{H}_k\textrm{ which differ only on }I_j}}
 |E_0[\ell_{t^\eta}(\underline{\mathcal{H}}_k)]
-E_0[\ell_{t^\eta}(\underline{\mathcal{H}}_k')]|\\
&\le {\sup\set{
\int_0^{t^\eta} \sum_{y\in Y} p_u(0,y){\rm d}u
\colon Y\subset I_j\textrm{ and } \# Y\le (\log t)^2 }}.
\label{delta}
\end{split}
\end{equation}
Let us consider the case $d=1$ or 2 first. {Lemma~\ref{RWHK}}
implies
{\begin{equation}
 p_u(0,y)\le c(u^{-\frac{d}2}\wedge 1)
\begin{cases}
 \exp\left\{-\frac{|y|^2}{ct^\eta}\right\},& |y|\le {t^\eta},\\
c \exp\left\{-\frac{|y|}{c}\right\},& |y|>{t^\eta}
\end{cases}
\label{HK}
\end{equation}}
for $u\le t^\eta$.
For $|j|\le  1$, we simply replace $y$ in~\eqref{HK} by $0$ and obtain
\begin{equation}
\begin{split}
  \textrm{R.H.S.\ of \eqref{delta}}
 &\le (\log t)^2\int_0^{t^\eta} c(u^{-\frac{d}2}\wedge 1){\rm d}u \\
 &\le c
\begin{cases}
t^{\frac\eta2}(\log t)^2,&d=1,\\
(\log t)^3,&d=2.
\end{cases}
\label{on-diag}
\end{split}
\end{equation}
Next, for $1<|j|\le t^{\eta-\frac{\alpha k\epsilon}{d}}$, 
since the worst case is to have all the points {of $Y\subset I_j$} closest 
to the origin, we have
\begin{equation}
\begin{split}
\textrm{R.H.S.\ of \eqref{delta}}
&\le (\log t)^2\int_0^{t^\eta} c(u^{-\frac{d}2}\wedge 1)
\exp\left\{-\frac{t^{\frac{2\alpha k \epsilon}{d}-\eta}(|j|-1)^2}{c}\right\}{\rm d}u\\ 
&\le c
\exp\left\{-\frac{t^{\frac{2\alpha k\epsilon}{d}-\eta}(|j|-1)^2}{c}\right\}
\begin{cases}
t^{\frac\eta2}(\log t)^2,&d=1,\\
(\log t)^3,&d=2
\end{cases}
\label{off-diag}
\end{split}
\end{equation}
{by using the first line of \eqref{HK}.}
Finally for $|j|> t^{\eta-\frac{\alpha k\epsilon}{d}}{\vee 1}$, 
one can see that 
\begin{equation}
\begin{split}
\textrm{R.H.S.\ of \eqref{delta}}
&\le \exp\{-c|j|\}
\label{Poisson}
\end{split}
\end{equation}
{again by considering the worst case and using the second line 
of~\eqref{HK}.} The sum of this part is always bounded. 

Now we can complete the case $d=1$. 
For $k\epsilon >\frac\eta{2\alpha}$, the \emph{off-diagonal} 
part~\eqref{off-diag} is stretched exponentially small and 
\begin{equation}
\begin{split}
\sum_{j}\Delta_j^2&\le \sum_{{|j|\le 1}}
  {[\textrm{R.H.S.\ of \eqref{on-diag}}]}^2
 +\sum_{j\ge 2}  {[\textrm{R.H.S.\ of \eqref{off-diag}}]}^2+O(1)\\
&=ct^\eta(\log t)^4+O(1).
\end{split}
\end{equation}
Therefore, McDiarmid's inequality implies
\begin{equation}
\begin{split}
&\P\left(|E_0[\ell_{{t^\eta}}(\underline{\mathcal{H}}_k)]
 -\E E_0[\ell_{{t^\eta}}(\underline{\mathcal{H}}_k)]|>t^{\frac\eta2}(\log t)^3\right)\\
 &\quad\le c\exp\left\{-\frac{t^\eta(\log t)^6}{c\sum_j\Delta_j^2}\right\}\\
 &\quad=c\exp\set{-\frac{1}{2c}(\log t)^2}.
\end{split}
\end{equation}
Using~\eqref{mean} and the fact that $\#\mathcal{H}_k$ is
at most polynomial in $t$ (recall that $\mathcal{H}_k$ are subsets of 
$[-t^\mu, t^\mu]$ by the definition of $\tilde z$), 
we are done. 

The argument for the case $d= 2$ is very similar to $d=1$. 
For $k\epsilon >\frac\eta{\alpha}$, the part~\eqref{off-diag} is again 
stretched exponentially small and hence 
\begin{equation}
\begin{split}
\sum_{j}\Delta_j^2&\le \sum_{|j|\le 1}
  {\textrm{[R.H.S.\ of \eqref{on-diag}]}}^2
 +\sum_{j\ge 2}  {\textrm{[R.H.S.\ of \eqref{off-diag}]}}^2+O(1)\\
&=c(\log t)^6+O(1).
\end{split}
\end{equation}
This bounds and McDiarmid's inequality yield 
\begin{equation}
\P\left(|E_0[\ell_t(\underline{\mathcal{H}}_k)]
 -\E E_0[\ell_t(\underline{\mathcal{H}}_k)]|>(\log t)^4\right)
 \le c\exp\set{-\frac{1}{2c}(\log t)^2}
 \end{equation}
and the rest is the same as before.

Finally we settle the case $d\ge 3$. This case is simpler since we know
\begin{equation}
\int_0^{t^\eta} p_u(0,y){\rm d}u
\le \int_0^\infty p_u(0,y){\rm d}u
\le c (|y|+1)^{2-d}. 
\end{equation}
Using this in~\eqref{delta}, we obtain
\begin{equation}
\begin{split}
\sum_j \Delta_j^2&\le (\log t)^4 \sum_j  \sup_{y\in I_j}
 \left(\int_0^{t^\eta} p_u(0,y){\rm d}u\right)^2\\
&\le c(\log t)^4 \sum_j 
\left(t^{\frac{\alpha k \epsilon}{d}}|j|+1\right)^{4-2d}. 
\end{split}
\end{equation}
This is always $O((\log t^4))$ and the rest is routine. 

{The second part of the lemma follows from the first one and the
Borel--Cantelli lemma.} 
\end{proof}
We use this lemma, together with Theorem~\ref{Chen}, to bound
\begin{equation}
P_0\left(\ell_t(\mathcal{H}_k\setminus \mathcal{H}_{k+1})
>\frac{1}{{K+1}}t^{\rho-(k+1)\epsilon}\right)
\le  P_0\left(\ell_t(\mathcal{H}_k)
>\frac{1}{{K+1}}t^{\rho-(k+1)\epsilon}\right).
\label{slice-wise}
\end{equation}
Note first that this probability is zero if 
$k<\frac{\rho-1}{\epsilon}-1$ as the total mass of $\ell_t$ is $t$. 
Hence we are only concerned with
\begin{equation}
 \rho-1-\epsilon\le k\epsilon\le \frac{d\mu}{\alpha}+\epsilon.
\label{restriction2}
\end{equation}
Let us start with the case $d=1$. 
We choose $b(t)=t^{2\rho-1-2(k+2)\epsilon}$ and $\eta=2(1+(k+2)\epsilon-\rho)$.
{Thanks to~\eqref{restriction2}}, one can verify that 
$k\epsilon>\frac{\eta}{2\alpha}$ for sufficiently small $\epsilon$. 
This allows us to use Lemma~\ref{lem1} to obtain
\begin{equation}
{4}a\left(\frac{t}{b(t)}\right)b(t)<\frac1{{K+1}} t^{\rho-(k+1)\epsilon}. 
\end{equation}
Therefore, Theorem~\ref{Chen} yields the bound 
\begin{equation}
\begin{split}
 & P_0\left(\ell_t(\mathcal{H}_k\setminus \mathcal{H}_{k+1})
 >\frac{1}{{K+1}}t^{\rho-(k+1)\epsilon}\right)\\
 &\quad\le P_0\left(\ell_t(\mathcal{H}_k\setminus \mathcal{H}_{k+1})
 >{4}a\left(\frac{t}{b(t)}\right)b(t)\right)\\
 &\quad\le \exp\{-b(t)\}\\
 &\quad= \exp\left\{-ct^{2\rho-1-2(k+2)\epsilon}\right\}.
\end{split}
\label{mon-1D}
\end{equation}
Since this is increasing in $k$, the largest upper bound is for $k=K$ 
and then $b(t)=t^{p(\alpha,\rho)-c\epsilon}$. 
Now all the terms on the right hand
side of~\eqref{slicing} are bounded by 
$\exp\{-t^{p(\alpha,\rho)-c\epsilon}\}$ and the proof is completed. 

The case $d\ge 2$ is almost the same. We choose 
$\eta=1-\rho+(k+3)\epsilon$ and $b(t)=t^{\rho-(k+3)\epsilon}$. 
One can check that this satisfies $k\epsilon>\frac{\eta}{\alpha}$ with the
help of~\eqref{mu-1} and hence by Lemma~\ref{lem1},
\begin{equation}
{4}a\left(\frac{t}{b(t)}\right)b(t)<\frac1{{K+1}} t^{\rho-(k+1)\epsilon}. 
\end{equation}
Then, just as above, we obtain
\begin{equation}
P_0\left(\ell_t(\mathcal{H}_k\setminus \mathcal{H}_{k+1})
 >\frac{1}{{K+1}}t^{\rho-(k+1)\epsilon}\right)
= \exp\left\{-ct^{\rho-(k+3)\epsilon}\right\}
\label{mon-HD}
\end{equation}
which is largest and matches~\eqref{rwrs} at $k=K$.
\begin{remark}
\label{rem:AC}
Let us comment on the technical difference between our proof and the earlier 
works~\cite{AC03a,AC03b,Cas04} that contain quenched tail estimates. 
We focus on the upper bound which is more 
involved in the present model. 

The arguments in~\cite{AC03a,AC03b,Cas04} rely on
the spectral theoretic technics which were developed in the study of 
the so-called parabolic Anderson model. 
More precisely, they obtain the upper bound through the exponential moment 
(with a small parameter $\epsilon_t$) of $A_t$, using 
the fact that it is a solution of $\partial_tu=(\Delta+\epsilon_tz)u$. 
By restricting the problem to a certain macro-box as we have done above
and using the eigenfunction
expansion, the problem is reduced to the behavior of the largest
eigenvalue of $\Delta+\epsilon_t z$ in a large box. Then a localization 
procedure demonstrated in~\cite{GK00} (Proposition~1) 
allows them to bound it by the
maximum of eigenvalues among smaller sub-boxes. A crucial ingredient
to control the maximum of local eigenvalue is a large deviation principle for 
a scaled $z$-field in a suitable function space, Lemma~2.1 in~\cite{AC03b} 
or Theorem~1 in~\cite{Cas04}. 

In our heavy tailed setting, it seems difficult to find a substitute for
the large deviation principle for the $z$-field. The above proof reveals 
that the relevant part of the $z$-field looks like a 
delta function. This makes the choice of function space (or topology) 
a delicate problem. 
\end{remark}
\subsection{Proof of Theorem~\ref{LDP}}
We follow the same strategy as in the previous subsection. Fix 
$c'\in (\E[z(0)], c)$ and bound the large deviation probability as
\begin{equation}
\begin{split}
  P_0\left(A_t\ge ct\right)
&\le  P_0\left(\int_0^t z(S_u)\wedge t^{l\epsilon}{\rm d}u
 \ge c't\right)\\
&\quad+\sum_{k=l}^K
 P_0\left(\ell_t(\mathcal{H}_k\setminus \mathcal{H}_{k+1})
 \ge \frac{c-c'}{K+1}t^{1-(k+1)\epsilon}\right)\\
&\quad +P_0\left(\max_{0\le u\le t}|S_u| > t^\mu\right),
 \label{slicing2}
\end{split}
\end{equation}
where we set $\rho=1$ in the definition of $K$ and $\mu$. The third term is
negligible as before. As for the second term, 
we use the same level sets $\mathcal{H}_k$ and $\eta$ as in the previous 
subsection. Then with $\rho=1$, we see that the assumptions of Lemma~\ref{lem1} 
hold for $k>\frac{2}{\alpha-1}$ ($d=1$) and $k>\frac{3}{\alpha-1}$
($d\ge 2$) and hence~\eqref{mon-1D} and~\eqref{mon-HD} respectively. 
Thus if we set $l$ to be the smallest integer larger than 
$\frac{2}{\alpha-1}$ ($d=1$) and $\frac{3}{\alpha-1}$ ($d\ge 2$), then
the summand with $k=K$ dominates all the others. Since it has the desired 
asymptotics, it remains to show that the first term in \eqref{slicing2} is 
negligible.

In order to simplify the notation, we rename $l\epsilon$ to $\epsilon$ and
$c'$ to $c$. 
We are going to show that for any $\epsilon\in(0,\frac{1}{20})$, 
$\P$-almost surely, 
\begin{equation}
P_0\left(\int_0^t z(S_u)\wedge t^\epsilon {\rm{d}} u
>ct\right)
\le \exp\set{-{t^{1-7\epsilon}}}
\label{lowest-level}
\end{equation}
for large $t$. This proves Theorem~\ref{LDP} since the exponents in
\eqref{ldp} are strictly less than one.
Our argument is again based on~\cite{Che01} (the proof of Theorem~2 there,
presented in Section 3) and hence it is natural to
begin with the following type of bound. 
\begin{lemma}
\label{lem2}
Let $\epsilon>0$. Then $\P$-almost surely, for all sufficiently large $t$, 
\begin{equation}
\sup_{|x|\le t}
\left|E_x\left[\int_0^{{T}} z(S_u)\wedge t^\epsilon{\rm{d}} u\right]
-{T}\E[z(0)]\right|
\le {\frac{T}{\log t}}
\end{equation}
holds for any $T\in [t^{5\epsilon},t]$.
\end{lemma}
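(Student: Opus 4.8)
The plan is to rewrite the expectation in terms of the truncated Green function and reduce the claim to a concentration inequality for a linear functional of the i.i.d.\ scenery. Put $\zeta(y)=z(y)\wedge t^\epsilon$ and $G_T(x,y)=\int_0^T p_u(x,y)\,{\rm d}u$, so that by Fubini--Tonelli
\[
E_x\left[\int_0^{T}\zeta(S_u)\,{\rm d}u\right]=\sum_{y\in\Z^d}\zeta(y)\,G_T(x,y),\qquad \sum_{y\in\Z^d}G_T(x,y)=T .
\]
In the setting of Theorem~\ref{LDP} one has $\alpha>1$, hence $\E[z(0)]<\infty$ and
\[
0\le \E[z(0)]-\E[\zeta(0)]=\int_{t^\epsilon}^\infty\P(z(0)>r)\,{\rm d}r=t^{-(\alpha-1)\epsilon+o(1)},
\]
which is $o(1/\log t)$. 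Since $\E[\zeta(y)]=\E[\zeta(0)]$ for all $y$, we get $\bigl|E_x[\int_0^T\zeta(S_u)\,{\rm d}u]-T\E[z(0)]\bigr|\le |W_{x,T}|+\tfrac{T}{4\log t}$ for large $t$, where $W_{x,T}:=\sum_{y}\bigl(\zeta(y)-\E[\zeta(y)]\bigr)G_T(x,y)$. Thus it suffices to show that $\P$-a.s.\ $|W_{x,T}|\le\tfrac{T}{4\log t}$ for all $|x|\le t$ and all $T\in[t^{5\epsilon},t]$, once $t$ is large.

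Fix $x$ and $T$. Then $W_{x,T}$ is a sum of independent centered random variables $\bigl(\zeta(y)-\E[\zeta(y)]\bigr)G_T(x,y)$, each taking values in an interval of length at most $t^\epsilon G_T(x,y)$. The crucial deterministic bound is
\[
\sum_{y}G_T(x,y)^2\le\Bigl(\max_y G_T(x,y)\Bigr)\sum_y G_T(x,y)=G_T(x,x)\,T=:g_T\,T,
\]
where we used $p_u(x,y)\le p_u(0,0)$ and, from Lemma~\ref{RWHK} applied at the origin, the diagonal estimate $p_u(0,0)\le cu^{-d/2}$ for $u\ge1$, which gives $g_T\le c(\sqrt T\,1_{d=1}+\log T\,1_{d=2}+1_{d\ge3})$. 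Hoeffding's inequality (see e.g.\ \cite{BLM13}) then yields
\[
\P\left(|W_{x,T}|>\tfrac{T}{4\log t}\right)\le 2\exp\left\{-\frac{T^2/(8\log^2 t)}{t^{2\epsilon}\sum_y G_T(x,y)^2}\right\}\le 2\exp\left\{-\frac{T}{8\,t^{2\epsilon}g_T\log^2 t}\right\}.
\]
Inserting the bound on $g_T$ and using $T\ge t^{5\epsilon}$, the exponent is bounded below by a positive power of $t$ divided by a power of $\log t$: at least $c\,t^{\epsilon/2}/\log^2 t$ when $d=1$ and at least $c\,t^{3\epsilon}/\log^3 t$ when $d\ge 2$. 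In particular it eventually dominates the logarithm of the number of relevant pairs $(x,T)$, which is $O(\log t)$.

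It remains to make the estimate uniform. Since $|W_{x,T}-W_{x,T'}|\le t^\epsilon|T-T'|$, it is enough to control $W_{x,T'}$ for $|x|\le t$ and $T'$ ranging over the integers in $[t^{5\epsilon},t]$, a set of at most $c\,t^{d+1}$ pairs; the errors incurred by passing from $T$ to the nearest integer $T'$ (both in $W$ and in $T\E[z(0)]$) are $O(t^\epsilon)=o(t^{5\epsilon}/\log t)$ and are harmlessly absorbed into the accuracy. A union bound over these pairs, together with the previous display, makes the failure probability summable along $t=n\in\N$; since the truncation level $t^\epsilon$ and the weights $G_T(x,y)$ are respectively monotone and continuous in $t$, the statement at integer $n$ transfers to all $t\in[n,n+1)$ at the cost of constants, and Borel--Cantelli finishes the proof.

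The argument is essentially routine once these quantities are isolated; the only point requiring attention is the sublinearity $g_T=o(T)$, which is exactly what makes the Hoeffding exponent a genuine power of $t$ after one divides by the $\log^2 t$ forced by the target accuracy $T/\log t$ and by the polynomial union-bound count. This is tightest in $d=1$, where $g_T\asymp\sqrt T$, and it is the reason the admissible range of $T$ has to start at a power $t^{5\epsilon}$ of $t$.
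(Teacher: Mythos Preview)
Your proof is correct and follows the same overall strategy as the paper: write the expectation as a linear functional $\sum_y \zeta(y)G_T(x,y)$ of the bounded i.i.d.\ variables $\zeta(y)=z(y)\wedge t^\epsilon$, control the mean via the truncation error $\E[z(0)]-\E[\zeta(0)]=t^{-(\alpha-1)\epsilon+o(1)}$, and then establish concentration with an exponent that is a positive power of $t$ once $T\ge t^{5\epsilon}$; the passage to all $(x,T,t)$ by a union bound, Lipschitz continuity in $T$, and Borel--Cantelli along integers is handled the same way.

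The one genuine difference is the concentration step. The paper bounds the $\ell^2$-Lipschitz constant of the functional via Cauchy--Schwarz and Chapman--Kolmogorov as $\int_0^T p_{2u}(x,x)^{1/2}\,{\rm d}u\le cT^{3/4}$ and then invokes Talagrand's inequality, whereas you apply Hoeffding's inequality directly with the variance proxy $t^{2\epsilon}\sum_y G_T(x,y)^2\le t^{2\epsilon}G_T(x,x)\,T$. Since the functional is linear in bounded independent variables, Hoeffding is entirely adequate here and avoids the extra machinery; moreover your bound $\sum_y G_T(x,y)^2\le G_T(x,x)\,T$ is at least as sharp (and obtained in one line) as the paper's $\bigl(\int_0^T p_{2u}(x,x)^{1/2}\,{\rm d}u\bigr)^2$. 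Both yield an exponent of order $t^{\epsilon/2}/(\log t)^2$ in the worst case $d=1$, which is the binding constraint explaining the threshold $T\ge t^{5\epsilon}$. A minor side benefit of your route is that no preliminary restriction to $z\,1_{[-2t,2t]^d}$ is needed, since $\sum_y G_T(x,y)^2<\infty$ already.
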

\begin{proof}
One can check that it suffices to prove the claim for $z1_{[-2t,2t]^d}$ 
instead of $z$. Henceforth, every configuration appearing below is set to be 
zero outside $[-2t,2t]^d$. The proof is similar to that of Lemma~\ref{lem1}.
The average over the random scenery is close to ${T}\E[z(0)]$. Indeed, 
there exists $c>0$ such that
\begin{equation}
\begin{split}
 \left|\E E_x\left[\int_0^{{T}} z(S_u)\wedge t^{\epsilon}{\rm{d}} u\right]
 - {T}\E[z(0)] \right|&=T\E[(z(0)-t^\epsilon)_+]\\
&\le \frac{cT}{t^{\epsilon(\alpha-1)/2}}
\end{split}\label{expectation-truncation}
\end{equation}
for any $t>0$, uniformly in {$T\in [t^\epsilon,t]$} and $x\in[-t,t]^d$. 
Note that this is much smaller than $T/\log t$ for sufficiently 
large $t>0$ since we are considering $\alpha>1$.
We shall bound the variation of 
$E_x[\int_0^{{T}} z(S_u)\wedge t^{\epsilon}{\rm{d}} u]$
as a functional of $z$. 
For two configurations $z_1$ and $z_2$, by using the Cauchy--Schwarz
inequality and the Chapman--Kolmogorov identity, we have 
\begin{equation}
\begin{split}
&\left|E_x\left[\int_0^{{T}} z_1(S_u){\rm{d}} u\right]
 -E_x\left[\int_0^{{T}} z_2(S_u){\rm{d}} u\right]\right|\\
&\quad = \left|\sum_{y\in{[-2t,2t]^d}}\int_0^{{T}} p_u(x,y)z_1(y){\rm{d}} u
-\sum_{y\in{[-2t,2t]^d}}\int_0^{{T}} p_u(x,y)z_2(y){\rm{d}} u\right|\\
&\quad \le 
\int_0^{{T}}\left(\sum_{y\in{[-2t,2t]^d}}p_u(x,y)^2\right)^{\frac12}
\left(\sum_{y\in{[-2t,2t]^d}}|z_1(y)-z_2(y)|^2\right)^{\frac12} {\rm{d}} u\\
&\quad \le |z_1-z_2|_2 \int_0^{{T}} p_{2u}(x,x)^{\frac12}{\rm{d}} u\\
&\quad \le c{T}^{\frac34}|z_1-z_2|_2.
\end{split}
\end{equation}
This allows us to use the Talagrand's concentration
inequality (Theorem~6.6 in~\cite{Tal96} and {the argument below (4.2) in
the same paper to replace the median by the mean}) to obtain the bound
\begin{equation}
\begin{split}
& \P\left(\left|E_x\left[\int_0^{{T}} z(S_u)\wedge t^\epsilon{\rm{d}} u
\right]
-\E E_x\left[\int_0^{{T}} z(S_u)\wedge t^\epsilon{\rm{d}} u\right]\right|
\ge \frac{T}{4\log t}\right)\\
&\quad  \le c_1\exp\set{-c_2\frac{{T^2}}{T^{\frac32}t^{2\epsilon}(\log t)^2}}.
\end{split}
\end{equation}
{One can check that the argument of the exponential is a positive power of
$t$ for $T>t^{5\epsilon}$ and $\epsilon<\frac{1}{20}$ 
From this and~\eqref{expectation-truncation}, we deduce the discretized bound 
\begin{equation}
\sup_{|x|\le t}
\left|E_x\left[\int_0^{{T}} z(S_u)\wedge t^\epsilon{\rm{d}} u\right]
-{T}\E[z(0)]\right|
\le \frac{T}{4\log t}
\end{equation}
uniformly in $T\in[t^{5\epsilon}, t]\cap\N$} for all sufficiently large 
$t\in\N$,
$\P$-almost surely, by the Borel--Cantelli lemma. This can be extended to 
continuum $T$, with a price of extra factor 2 on the right hand side, by a 
simple monotonicity argument. 
Moreover, as we vary $t$ over an interval 
$[\lfloor t \rfloor,\lfloor t \rfloor+1]$, the $\sup_{|x|\le t}$ is monotone 
and the above integral varies at most $O(T t^{\epsilon-1})$. Hence the above 
bound extends to continuum $t$ with another extra factor 2.
\end{proof}
Now we repeat the argument in the proof of Theorem~2 in~\cite{Che01}. 
We write $\lambda={t^{-6\epsilon}}$
and $f(x)=z(x)\wedge t^{\epsilon}$ to simplify the notation. 
Let $P_uf(x)=E_x[f(S_u)]$ and 
\begin{equation}
u_\lambda(x)=\int_0^\infty e^{-\lambda u}P_uf(x){\rm{d}} u
=(\lambda-\A)^{-1}f(x), 
\end{equation}
where $\A={(2d)^{-1}\Delta}$ denotes the generator of the continuous time 
simple random walk. {Note that $\P$-almost surely, $f$ is not identically 
zero and hence $u_\lambda(x)>0$ for all $x\in\Z^d$.}
By definition we have  
\begin{equation}
 -\A u_\lambda(x)=u_\lambda(x)(v_\lambda(x)-\lambda),
\end{equation}
with $v_\lambda={f}{u_\lambda}^{-1}$, that is, $u_\lambda$ is harmonic for 
$\A+(v_\lambda-\lambda)$ and consequently
\begin{equation}
 u_\lambda(x)=e^{-\lambda t}E_x\left[ u_\lambda(S_t)\exp\set{\int_0^t
v_\lambda(S_u){\rm{d}} u}\right].
\label{FK}
\end{equation}
{By using Lemma~\ref{lem2} and the obvious bound 
$\int_0^T P_uf(x){\rm{d}} u\le Tt^\epsilon$ for $T\ge 0$, 
we obtain 
\begin{equation}
 u_\lambda(x)\sim \frac{\E[z(0)]}{\lambda}
\label{Tauber}
\end{equation}
as $\lambda={t^{-6\epsilon}}\downarrow 0$ uniformly in $|x|\le t$.
Indeed, integration by parts shows
\begin{equation}
\begin{split}
  u_\lambda(x)&=\left[e^{-\lambda u}\int_0^u P_sf(x){\rm d}s\right]_{u=0}^\infty
 +\int_0^\infty\lambda e^{-\lambda u}\int_0^u P_sf(x){\rm d}s{\rm d}u\\
 &=\int_{t^{5\epsilon}}^t\lambda e^{-\lambda u}u\E[z(0)]{\rm d}u+R(\lambda),
\end{split}
\label{IBP}
\end{equation}
where the first term is asymptotic to $\lambda^{-1}\E[z(0)]$ and $R(\lambda)$ 
is the remainder term. The latter is bounded as 
\begin{equation}
\begin{split}
 |R(\lambda)|&\le\left(\int_0^{t^{5\epsilon}}+\int_t^\infty\right) 
 \lambda e^{-\lambda u}t^\epsilon u{\rm d}u
 +\frac{1}{\log t}\int_{t^{5\epsilon}}^t \lambda e^{-\lambda u} {u}{\rm d}u\\
&=o(\lambda^{-1}).
\end{split}
\end{equation}
}

Using this in~\eqref{FK}, we find 
\begin{equation}
\begin{split}
&
E_0\left[\exp\set{
\frac{\lambda}{\E[z(0)]+\epsilon}\int_0^t f(S_u){\rm{d}} u
}
\colon \max_{0\le u\le t}|S_u|\le t\right]\\
&\quad\le 
E_0\left[ \exp\set{\int_0^t
 v_\lambda(S_u){\rm{d}} u}\colon \max_{0\le u\le t}|S_u|\le t\right]\\
&\quad\le \frac{1}{\min_{|x|\le t}u_\lambda(x)}
E_0\left[ u_\lambda(S_t)\exp\set{\int_0^t
 v_\lambda(S_u){\rm{d}} u}\right]\\
&\quad\stackrel{\eqref{FK}}{\le}
{e^{\lambda t}}\frac{u_\lambda(0)}{\min_{|x|\le t}u_\lambda(x)}\\
&\quad\stackrel{\eqref{Tauber}}{=}
{e^{\lambda t}}(1+o(1))
\end{split}
\end{equation}
as $t$ tends to infinity. 
Applying the Markov inequality, we arrive at
\begin{equation}
\begin{split}
& P_0\left(\int_0^t f(S_u){\rm{d}} u \ge ct, \max_{0\le u\le t}|S_u|\le t\right)\\
&\quad \le 2\exp\set{\lambda t-\frac{ct}{\E[z(0)]\lambda^{-1}}}\\
&\quad = 2\exp\set{-\left(\frac{c}{\E[z(0)]}-1\right)
{t^{1-6\epsilon}}}. 
\end{split}
\end{equation}
As $P_0(\max_{0\le u\le t}|S_u|> t)$ decays exponentially in $t$, 
this completes the proof of \eqref{lowest-level}. 

\section{Proofs for the random walk in random layered conductance}
\label{proof-RCM}
\subsection{Proofs of Theorems~\ref{exp} and~\ref{alg}}
Recall that we write 
$x\in\Z^{1+d}$ as $(x_1, x_2)$ with $x_1\in\Z$ and $x_2\in \Z^d$. 
Also whenever one finds a point in $\R^k$ below, it should be understood 
as a closest lattice point.
\begin{proof}[Proof of Theorem~\ref{exp}]
We give the proof only for $d=1$ since the higher dimensional case is 
almost the same. Without loss of generality we may assume 
$\mathbf{e}=\mathbf{e}_2$. 
Let $S^1$ and $S^2$ be continuous time simple random walks on $\Z$ 
independent of each other (strictly speaking, they have jump rates $\frac12$). 
Then our process has the representation
\begin{equation}
 (X^1_t, X^2_t)_{{t\ge 0}}=(S^1_{A^2_t}, S^2_t)_{{t\ge 0}},
\end{equation}
where the clock process is defined by $A^2_t=\int_0^t z(S^2_u){\rm d}u$. 
This representation allows us to write
\begin{equation}
\begin{split}
P^\omega_0(X_t=t^\delta\mathbf{e}_1+t^\gamma\mathbf{e}_2)
&=P_0^{\otimes 2}(S^1_{A^2_t}=t^\delta, S^2_t=t^\gamma)\\
&=E_0[p_{A^2_t}(0,t^\delta)1_{\{S^2_t=t^\gamma\}}].
\label{reduction}
\end{split}
\end{equation}
The last formula involves only the second random walk $S^2$ and 
henceforth we drop the superscript. \\

\noindent\textsc{\underline{Upper bound}:}
We first use H\"older's inequality to obtain
\begin{equation}
 \text{R.H.S.\ of } \eqref{reduction} 
 \le E_0\left[p_{A_t}(0,t^\delta)^2\right]^{\frac12}
P_0(S_t=t^\gamma)^{\frac12}. 
\end{equation}
By {Lemma~\ref{RWHK}}, we have 
\begin{equation}
P_0(S_t=t^\gamma)= \exp\{-t^{(2\gamma-1)\wedge\gamma +o(1)}\} 
\label{2nd-factor}
\end{equation}
as $t$ tends to infinity. On the other hand, 
\begin{equation}
\begin{split}
E_0\left[p_{A_t}(0,t^\delta)^2\right] 
&\le 
E_0\left[\exp\left\{-\frac{ct^{2\delta}}{A_t}\right\}
1_{\{A_t\ge t^\delta\}}\right]
+E_0\left[\exp\{-ct^\delta\}1_{\{A_t< t^\delta\}}\right]\\
&\le {\sum_{{0\le k<t^M}}
\exp\left\{-\frac{ct^{2\delta}}{{t^\delta+k+1}}\right\}
 P_0\left(t^\delta+k\le A_t< t^\delta+k+1\right)}\\
 &\qquad
+P_0\left(A_t\ge t^M\right)+
E_0\left[\exp\{-ct^\delta\}1_{\{A_t< t^\delta\}}\right].
\end{split}
\end{equation}
The second term can be made negligible by choosing $M>0$ sufficiently 
large and using Theorem~\ref{RWRS}. The first term is bounded by
\begin{equation}
t^M\sup_{{\rho\in[\delta, M]}} 
\exp\{-ct^{2\delta-\rho}\}P_0\left(A_t\ge t^\rho\right)
\end{equation}
with a slightly smaller constant $c>0$. 
We use Theorem~\ref{RWRS} to proceed as
\begin{equation}
\exp\{-ct^{2\delta-\rho}\}P_0\left(A_t\ge t^\rho\right)
 \le \exp\left\{-ct^{2\delta-\rho}-t^{p(\alpha,\rho)-\epsilon}\right\}
\end{equation}
for any $\epsilon>0$, where $p(\alpha,\rho)$ is set to be zero for 
$\rho\le \frac{\alpha+1}{2\alpha}\vee1$. 
Therefore, we arrive at an upper bound of the form
\begin{equation}
E_0[p_{A_t}(0,t^\delta)^2]
\le \exp\left\{-ct^{q(\alpha,\delta)-\epsilon}\right\}
\end{equation}
with 
\begin{equation}
 q(\alpha,\delta)=\delta\wedge \left({\inf_{\rho\in[\delta,M]}}
 p(\alpha,\rho)\vee(2\delta-\rho)\right). 
\label{q2}
\end{equation}
It is a simple matter of checking that this coincides with~\eqref{q}.
\begin{remark}
\label{opt-HK}
We give a brief guide on the final step above for $d=1$. 
In what follows, the constant $M>0$ is chosen sufficiently large.
Figures~\ref{a<1} and~\ref{a>1} might also help. 
Note that $p(\alpha,\rho)$ is 
increasing in $\rho$ whereas $2\delta-\rho$ is decreasing. 
Hence the infimum over 
$\rho\in (0,\infty)$ of $p(\alpha,\rho)\vee(2\delta-\rho)$ takes place at the 
point where 
$p(\alpha,\rho)=2\delta-\rho$, which has a unique solution except for the case
$\alpha>1$ and $\frac{1}{2}<\delta<\frac{\alpha}{\alpha+1}$. 
One can check that the minimizer lies outside $[\delta,M]$ if and only 
if $\alpha>1$ and $\delta>\frac{\alpha}{\alpha-1}$, in which case the infimum
over $\rho\in[\delta,M]$ is attained at $\delta$. Based on these observations, 
one finds that
\begin{equation}
 \begin{cases}
{0,}
&{\delta\le\frac12\vee\frac{\alpha+1}{4\alpha},}\\[5pt]
\frac{4\alpha\delta-\alpha-1}{3\alpha+1},
&{\frac{\alpha}{\alpha+1}\vee\frac12\le}\delta<\frac{2\alpha+1}{2\alpha},\\[5pt]
\frac{\alpha(2\delta-1)}{\alpha+1},
&{\frac{2\alpha+1}{2\alpha}\le \delta <\frac{\alpha}{(\alpha-1)_+},}\\[5pt]
{\frac{\alpha(\delta-1)}{d},}
&{\delta>\frac{\alpha}{(\alpha-1)_+}.}
 \end{cases}
\end{equation}
In the first three cases, the values are smaller than $\delta$ and the 
fourth one is larger than $\delta$. 
Finally, in the remaining case $\alpha>1$ and 
$\frac12<\delta<\frac{\alpha}{\alpha+1}$, we have 
\begin{equation}
 p(\alpha,\rho)\vee(2\delta-\rho)=
\begin{cases}
 2\delta-\rho,& \rho\le 1, \\
 p(\alpha,\rho),& \rho>1 
\end{cases}
\end{equation}
and this takes the minimum value $2\delta-1$ at $\rho=1\in[\delta,M]$. 
\end{remark}
\begin{figure}[tbp]
 \begin{picture}(300,180)(-80,-13)
 \put(-10,-10){$0$}
 \put(0,0){\vector(1,0){200}}
 \put(0,0){\vector(0,1){150}}
 \put(205,0){$\rho$}
 \put(0,155){$h$}
 \put(0,64){\line(1,-1){64}}
 \put(-42,64){$\delta<\frac{\alpha+1}{4\alpha}$}
 \put(0,115){\line(1,-1){115}}
 \put(-80,115){$\frac{\alpha+1}{4\alpha}<\delta<\frac{2\alpha+1}{2\alpha}$}
 \put(70,150){\line(1,-1){110}}
 \put(82,140){$\delta>\frac{2\alpha+1}{2\alpha}$}
 \put(69,-13){$\frac{\alpha+1}{2\alpha}$}
 \put(45,-13){$1$}
 \multiput(50,0)(0,5){11}{\line(0,1){2}}
 \put(135,-13){$\frac{\alpha+1}{\alpha}$}
 \multiput(140,0)(0,5){11}{\line(0,1){2}}
 \put(-13,45){$1$}
 \multiput(0,50)(5,0){29}{\line(1,0){2}}
 \qbezier(72,0)(104,25)(140,50)
 \qbezier(200,70)(170,60)(140,50)
 \dashline{2}(0,0)(150,150)
 \end{picture}
\caption{The case $\alpha\le 1$: The decreasing lines are $h=2\delta-\rho$. 
The increasing dashed line is $h=\rho$. 
The increasing piecewise linear curve, including the flat piece from $(0,0)$ 
to $(\frac{\alpha+1}{2\alpha},0)$, is $h=p(\alpha,\rho)$. 
In this case, the slopes of the latter never exceeds one. It follows that
the point $2\delta-\rho=p(\alpha,\rho)$ is always below $\delta$,
that is, the point of $\rho=2\delta-\rho$.}\label{a<1} 
\end{figure}
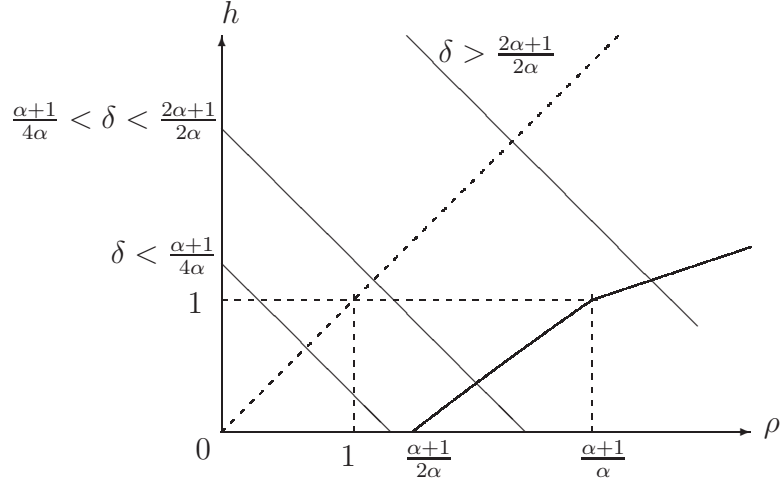
\begin{figure}[tbp]
 \begin{picture}(300,230)(-80,-13)
 \put(0,0){\vector(1,0){200}}
 \put(0,0){\vector(0,1){200}}
 \put(-10,-10){$0$}
 \put(205,0){$\rho$}
 \put(0,205){$h$}
 \put(0,84){\line(1,-1){84}}
 \put(-65,84){$\frac{1}{2}<\delta<\frac{\alpha}{\alpha+1}$}
 \put(0,140){\line(1,-1){140}}
 \put(-80,140){$\frac{\alpha}{\alpha+1}<\delta<\frac{2\alpha+1}{2\alpha}$}
 \put(40,190){\line(1,-1){150}}
 \put(195,35){$\frac{2\alpha+1}{2\alpha}<\delta<\frac{\alpha}{\alpha-1}$}
 \put(140,200){\line(1,-1){60}}
 \put(205,140){$\delta>\frac{\alpha}{\alpha-1}$}
 \put(70,-3){$\bullet$}
 \put(70,-13){$1$}
 \put(105,-13){$\frac{\alpha+1}{\alpha}$}
 \put(70,15){$\circ$}
 \qbezier(73,19)(92,45)(110,70)
 \qbezier(180,200)(145,135)(110,70)
 \multiput(110,0)(0,5){14}{\line(0,1){2}}
 \multiput(0,70)(5,0){22}{\line(1,0){2}}
 \multiput(73,3)(0,5){3}{\line(0,1){2}}
 \multiput(0,18)(5,0){15}{\line(1,0){2}}
 \put(-20,18){$\frac{\alpha-1}{\alpha+1}$}
 \put(-13,65){$1$}
 \dashline{2}(0,0)(200,200)
 \end{picture}
\caption{The case $\alpha> 1$: The decreasing lines are $h=2\delta-\rho$. 
The increasing dashed line is $h=\rho$. 
The increasing piecewise linear curve, including the flat piece from $(0,0)$ 
to $(1,0)$, is $h=p(\alpha,\rho)$. 
When $\alpha>2$, the slope of the last piece exceeds one. Then
the point of $2\delta-\rho=p(\alpha,\rho)$ may be above $\delta$
that is, the point of $\rho=2\delta-\rho$.}
\label{a>1}
\end{figure}
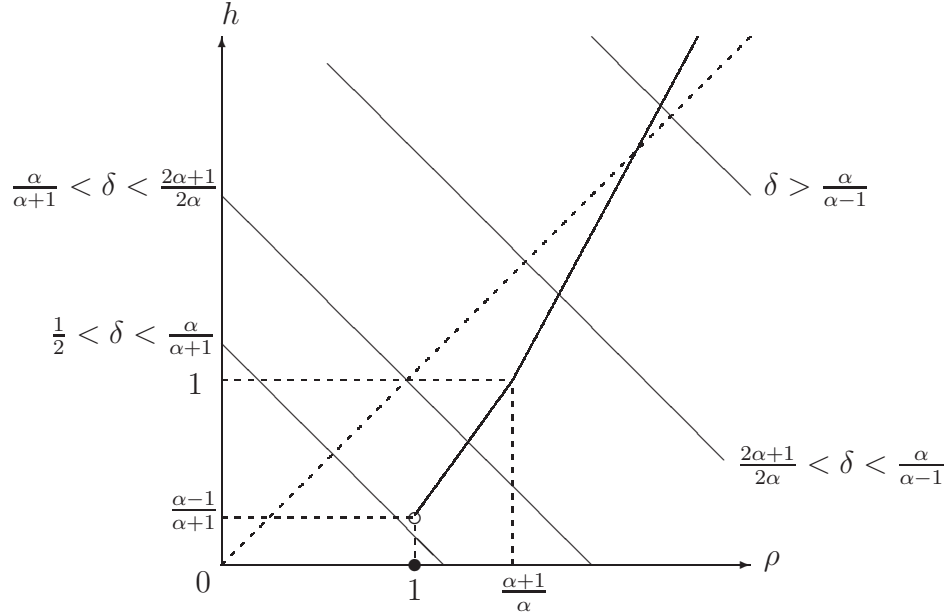
\noindent\textsc{\underline{Lower bound}}:
Since the lower bounds in the first regimes in~\eqref{q} and~\eqref{q-hd}
follow from Theorem~\ref{alg}, we consider the other cases.
In the third and fourth regimes $\frac{\alpha+1}{4\alpha}\vee\frac{\alpha}{\alpha+1}<\delta<\frac{\alpha}{(\alpha-1)_+}$, we set
\begin{equation}
 \rho =
\begin{cases}
\frac{(2\delta+1)(\alpha+1)}{3\alpha+1}{<\frac{\alpha+1}{\alpha}},
&\frac{\alpha+1}{4\alpha}\vee\frac{\alpha}{\alpha+1}<\delta
<\frac{2\alpha+1}{2\alpha},\\[5pt]
\frac{2\delta+\alpha}{\alpha+1}{\ge\frac{\alpha+1}{\alpha}},
&\delta\ge\frac{2\alpha+1}{2\alpha}
\end{cases}
\label{optimal-rho}
\end{equation}
which gives the infimum in the proof of the upper bound. 
{One can verify that this} is larger than 
$\delta\vee 1\vee{\frac{\alpha+1}{2\alpha}}$ {and hence $p(\alpha,\rho)$ 
is to be computed by~\eqref{def-p}}. 
(This can be seen from Figures~\ref{a<1} and~\ref{a>1}.
Indeed, the point where $p(\alpha,\delta)$ crosses $2\delta-\rho$ lies 
in $\{\rho>1{\vee\frac{\alpha+1}{2\alpha}}\}$ and also on the right of 
the dashed line with slope 1.) 
Now we use the first part of Lemma~\ref{RWHK} and the Markov property to obtain
\begin{equation}
\begin{split}
&\text{R.H.S.\ of } \eqref{reduction} \\
 &\quad\ge E_0\left[p_{A_t}(0,t^\delta)\colon S_t=t^\gamma,
\left(\frac{t}2\right)^\rho\le A_t\le t^{M}, S_{\frac{t}2}=0\right]\\
&\quad\ge \exp\set{-t^{2\delta-\rho+o(1)}}
P_0\left({\left(\frac{t}2\right)^\rho\le A_{\frac{t}2}}, 
S_{\frac{t}2}=0\right)P_0(S_{\frac{t}2}=t^\gamma)\\
&\qquad-P_0(A_t> t^{M}). 
\end{split}
\label{LB1}
\end{equation}
Theorem~\ref{RWRS} shows that the last term in~\eqref{LB1} decays faster
than the desired lower bound for sufficiently large $M$.
We use~\eqref{bridge} for the second factor and 
Lemma~\ref{RWHK} for the third factor to bound the first term 
from below by
\begin{equation}
\exp\set{-t^{(2\delta-\rho)\vee p(\alpha,\rho)\vee ((2\gamma-1)\wedge\gamma)
+o(1)}}. 
\end{equation}
This is the desired lower bound.

In second regime $\alpha>1$ and 
$\frac12<\delta\le\frac{\alpha}{\alpha+1}$, note first that we may impose
the constraint $A_t\le t^{\frac{\alpha+1}{\alpha}}$ by Theorem~\ref{RWRS}
as above since 
\begin{equation}
P_0\left(A_t\ge  t^{\frac{\alpha+1}{\alpha}}\right) 
=\exp\left\{-t^{1+o(1)}\right\}
\end{equation} 
decays faster than the desired bound. 
We use~\eqref{bridge} with $\rho=1-\epsilon$
to see that there is $m>0$ such that
\begin{equation}
 P_0\left(t^{1-\epsilon}\le A_\frac{t}{2}, S_{\frac{t}2}=0\right)
\ge t^{-m}
\end{equation}
for large $t$. 
Then, the lower bound follows by the same way as in~\eqref{LB1}.

Finally, in the last case $\delta\ge\frac{\alpha}{(\alpha-1)_+}>1$, 
we impose $t^{1-\epsilon} \le A_t\le t^M$ under which Lemma~\ref{RWHK} yields
\begin{equation}
 p_{A_t}(0,t^\delta)\ge \exp\left\{-t^{\delta+o(1)}\right\}.
\end{equation}
Given this bound, we can argue as before to obtain
\begin{equation}
\begin{split}
\text{R.H.S.\ of } \eqref{reduction} 
&\ge E_0\left[p_{A_t}(0,t^\delta)\colon S_t=t^\gamma,
{t^{1-\epsilon}\le A_t\le 
t^{M}}, 
S_{\frac{t}2}=0\right]\\
&\ge \exp\set{-t^{\delta+o(1)}} t^{-m}
P_0(S_{\frac{t}2}=t^\gamma)-P_0(A_t\ge t^M).
\end{split}
\label{LB2}
\end{equation}
The last term is negligible for sufficiently large $M$ by Theorem~\ref{RWRS}.
Using Lemma~\ref{RWHK} once again, we arrive at the desired lower bound.
\end{proof}
\begin{proof}[Proof of Theorem~\ref{alg}]
This is an easy consequence of~\eqref{bridge} and Theorem~\ref{RWRS}. 
Indeed, we have
\begin{equation}
\begin{split}
&\text{R.H.S.\ of \eqref{reduction}}\\  
&\quad\ge E_0\left[p_{A_t}(0,t^\delta)\colon S_t=t^\gamma,
 \left(\frac{t}{2}\right)^{2\delta}\le A_t \le 
 t^{\frac{\alpha+d}{\alpha}}, S_{\frac{t}2}=0\right]\\
&\quad\ge\inf\set{p_u(0,t^\delta)\colon
u\in \left[\left(\frac{t}2\right)^{2\delta},t^{\frac{\alpha+d}{\alpha}}\right]}
P_0\left(A_{\frac{t}{2}}\ge \left(\frac{t}{2}\right)^{2\delta}, 
S_{\frac{t}{2}}=0\right) P_0(S_{\frac{t}2}=t^\gamma)\\
&\qquad- P_0(A_t\ge t^{\frac{\alpha+d}{\alpha}})
\end{split}
\end{equation}
as before and the last term decays super-polynomially by 
Theorem~\ref{RWRS}. Now if $\gamma$ and $\delta$ satisfy the condition of 
Theorem~\ref{alg}, then the probability 
$P_0( A_{\frac{t}2}\ge (\frac{t}2)^{2\delta}, S_{\frac{t}2}=0)$ is bounded from 
below by a power of $t$ and the transition probabilities 
$p_{u}(0,t^\delta)$ 
for $u\in [t^{2\delta},t^{\frac{\alpha+d}{\alpha}}]$ and 
$P_0(S_{\frac{t}2}=t^\gamma)$ also decay like a negative power of $t$ 
by the local central limit theorem. 
\end{proof}

\subsection{Proof of Theorem~\ref{range}}
We shall present a proof for the third regime in~\eqref{q}
and later indicate how to adapt the argument to the other regimes, 
see Remark~\ref{adapt} below.
Note that in the regimes of concern, the tail 
asymptotics are determined by the probability that the second ($d$-dimensional) 
component of the random walk goes to an extreme point of $z$-field and 
stays there. 
In particular, the vertical displacement plays no role in the exponents. 
The key observation is that the heat kernel bound is violated when 
$z$-field has a atypically large extreme value and this has a polynomial
decaying probability. 

Fix $\rho$ which appears in the lower bound of Theorem~\ref{exp} 
and also the corresponding $\mu$ in the lower bound of Theorem~\ref{RWRS}.
Consider the event 
\begin{equation}
\set{\omega\colon \max_{|x|\le t^{\mu-\frac{r}{2}}}z(x)
\ge t^{\frac{d\mu}{\alpha}+\frac{r}{2}}}.
\label{event-violate}
\end{equation}
This event has a probability 
larger than $t^{-\frac{r(\alpha+d)}{2}+o(1)}$. 
On this event, we let the random walk go to a point that maximizes
$z(\cdot)$ within $|x|\le t^{\mu-\frac{r}{2}}$ and then follow the same 
strategy as in the lower bound of Theorem~\ref{RWRS}.
This strategy gives a slightly better lower bound
\begin{equation}
 P^\omega_0(X_t=t^\delta\mathbf{e}_1)\ge
 \exp\left\{-t^{q(\alpha,\delta)-r+o(1)}\right\}
\end{equation}
and hence we have proved the lower bound
\begin{equation}
 \P(\tau_r>t)\ge t^{-\frac{r(\alpha+d)}{2}+o(1)}.
\label{tau-lower}
\end{equation}

\begin{remark}
\label{adapt}
When $d\ge 2$ and 
$\delta\in(\frac{2\alpha}{2\alpha+d}\vee\frac{d}{4\alpha},\frac{2\alpha+d}{2\alpha}]$ or $d\ge 1$ and 
$\delta\in (\frac{2\alpha+d}{2\alpha},\frac{\alpha}{(\alpha-d)_+})$ 
(the fourth regime), 
the event
\begin{equation}
\max_{|x|\le t^{\mu-\frac{r}{2}}}z(x)
\ge t^{\frac{d\mu}{\alpha}+r}\textrm{ or } 
\max_{|x|\le t^{\mu-r}}z(x)\ge t^{\frac{d\mu}{\alpha}}
\end{equation}
respectively plays the role of~\eqref{event-violate}. 
When $q(\alpha,\delta)\vee ((2\gamma-1)\wedge\gamma)=2\delta-1$ 
(the second regime), the event
\begin{equation}
\max_{|x|\le t^{\delta-\frac{r}2}}z(x)
\ge \begin{cases}
t^{1+\delta+\frac{3r}{2}},& d=1,\\
t^{2(1-\delta+r)},& d\ge 2,
\end{cases} 
\label{exceedance}
\end{equation}
plays the role of~\eqref{event-violate}. 
\end{remark}
\section{Open problems}
We list a few open problems:
\begin{enumerate}
 \item It is of course desirable to get rid of $o(1)$ errors from both 
Theorems~\ref{RWRS} and~\ref{exp}. The proof of Theorem~\ref{RWRS}
tells us that it would be helpful to understand the asymptotics of 
$\sup_{|x|\le R}E_x[\int_0^Tz(S_u){\rm d}u]$ as $R,T\to\infty$ in a 
coupled manner. For this problem, one should be careful 
about the formulation. As observed in~\cite{HMS08}, the result can 
fluctuate in the leading order and then the weak and almost sure 
limit may differ.
 \item It is also interesting to understand how 
$P_0(A_t\ge t^{\frac{\alpha+1}{2\alpha}}b(t))$ ($d=1,\alpha\le 1$) and 
$P_0(A_t\ge t^{\frac{d}{2\alpha}}b(t))$ ($d\ge 2, \alpha\le \frac{d}2$) 
behave for 
a function $b(t)\to\infty$. This possibly leads to the law of iterated
logarithm type result. 
 \item In the case $\E[z(0)]<\infty$, it would be nice to complement 
Theorem~\ref{LDP} by finding an estimate on $P_0(A_t\le ct)$ for
$c<\E[z(0)]$. In contrast to Theorem~\ref{LDP}, the strategy for
the random walk is to stay on a low level set of $z$-field. 
Therefore it is natural to expect that the order of $\log P_0(A_t\le ct)$ 
is different from Theorem~\ref{LDP}. The study of lower deviations is
also related to the \emph{on-diagonal} estimate for the random conductance 
model through 
\begin{equation}
 P^\omega_0(X_t=0)=E_0[p_{A_t}(0,0) \colon S_t=0]
\sim ct^{-\frac{d}{2}}E_0\left[A_t^{-\frac{1}{2}} \,\middle|\, S_t=0\right].
\end{equation}
 \item Concerning the random conductance model, a natural extension is
to make other lines parallel to axis random. It no longer admits the 
time change representation in terms of the random walk in random scenery.
When conductance is 
bounded and uniformly elliptic, it is what is called toy-model 
in~\cite{Bis11}, Section~2.3. The quenched invariance principle
proved there extends to the case where the conductance of every coordinate
axis has finite mean. The heavy tailed case would be much more complicated
and there might be an explosion in finite time.
\end{enumerate}

\appendix

\section{}\label{Chen2}
We include the following version of Chen's result for the sake of completeness. 
The bound and its proof are literary taken from~\cite{Che01}.
As it is a non-asymptotic result, it applies to $f$ which depends on $t$
and one can deduce the version indicated in Remark~\ref{Chen-ver}. 
\begin{proposition}
Suppose $t>0$, $f\colon \Z^d\to[0,\infty)$ be a bounded function and
$a,b\colon (0,\infty)\to (1,\infty)$ satisfy
\begin{equation}
\sup_{x\in{\rm supp}f}E_x\left[\int_0^{\frac{t}{b(t)}} f(S_u){\rm d}u\right]
\le a\left(\frac{t}{b(t)}\right).
\label{ass-Chen2}
\end{equation}
Then for $\lambda>0$, 
\begin{equation}
P_0\left(\int_0^tf(S_u){\rm d}u\ge \lambda a\left(\frac{t}{b(t)}\right)b(t)\right)
\le 2^{\frac12}e^{\frac{1}{24(b(t)-1)}}
\left(\frac{\lambda e}{4}\right)^{-b(t)+1}.
\end{equation}
\end{proposition}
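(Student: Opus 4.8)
The plan is to reproduce Chen's moment method from~\cite{Che01}: slice $[0,t]$ into about $b(t)$ blocks, obtain a factorial‑moment bound on each block from the Markov property, multiply these up, and finish with Markov's inequality optimized over the number of moments. Concretely, put $\tau=t/b(t)$ and $m=\lceil b(t)\rceil$ (so $m\ge 2$ since $b(t)>1$), cover $[0,t]$ by the blocks $[(j-1)t/m,\,jt/m)$, $j=1,\dots,m$, each of length $t/m\le\tau$, and write $G=\int_0^t f(S_u)\,\d u$ and $G_j=\int_{(j-1)t/m}^{jt/m}f(S_u)\,\d u$, so $G=\sum_{j=1}^m G_j$.

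First I would upgrade the hypothesis~\eqref{ass-Chen2}, which bounds $E_x[\int_0^{\tau}f(S_u)\,\d u]$ only for $x\in\supp f$, to the statement $\sup_{x\in\Z^d}E_x[\int_0^{t/m}f(S_u)\,\d u]\le a(\tau)$. For $x\in\supp f$ this is monotonicity ($t/m\le\tau$ and $f\ge 0$). For $x\notin\supp f$, let $H=\inf\{u\ge 0\colon S_u\in\supp f\}$; since $f\ge 0$ one has $\int_0^{t/m}f(S_u)\,\d u=1_{\{H<t/m\}}\int_H^{t/m}f(S_u)\,\d u$, and the strong Markov property at $H$, the pathwise bound $\int_0^{t/m-H}f\le\int_0^{\tau}f$ (for the path after $H$), and $S_H\in\supp f$ give the claim. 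The second ingredient is the one‑block factorial moment bound
\[
E_x\!\left[\Bigl(\int_0^{t/m}f(S_u)\,\d u\Bigr)^k\right]\le k!\,a(\tau)^k\qquad(k\ge 1,\ x\in\Z^d),
\]
which follows from the elementary identity $(\int_0^s g)^k=k\int_0^s g(u_1)\bigl(\int_{u_1}^s g\bigr)^{k-1}\d u_1$ applied with $g=f(S_\cdot)$, by conditioning on $\mathcal F_{u_1}$, using monotonicity of the inner integral, and inducting on $k$ via the strengthened hypothesis.

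Next, iterating the Markov property across the blocks (conditioning successively on $\mathcal F_{(m-1)t/m},\dots,\mathcal F_{t/m}$) gives, for every $(n_1,\dots,n_m)$ with $n_1+\dots+n_m=n$, the bound $E_0\bigl[\prod_{j}G_j^{n_j}\bigr]\le a(\tau)^n\prod_j n_j!$; summing over the $\binom{n+m-1}{m-1}$ such tuples yields $E_0[G^n]\le a(\tau)^n\,n!\,\binom{n+m-1}{m-1}$. Markov's inequality with $n=m-1$ then gives
\[
P_0\bigl(G\ge\lambda a(\tau)b(t)\bigr)\le\frac{E_0[G^{m-1}]}{(\lambda a(\tau)b(t))^{m-1}}\le\frac{(m-1)!\,\binom{2(m-1)}{m-1}}{(\lambda b(t))^{m-1}},
\]
and I would finish with the standard estimates $\binom{2(m-1)}{m-1}\le 4^{m-1}/\sqrt{\pi(m-1)}$ and $(m-1)!\le\sqrt{2\pi(m-1)}\,((m-1)/e)^{m-1}e^{1/(12(m-1))}$, together with $m-1<b(t)$, to reach a bound of the asserted form $2^{1/2}e^{1/(24(b(t)-1))}(\lambda e/4)^{-b(t)+1}$.

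The telescoping identity, the Markov iterations, and the Stirling bookkeeping are all routine. The only point that needs genuine care is that $b(t)$ need not be an integer: one must slice into $m=\lceil b(t)\rceil$ blocks rather than $b(t)$ of them, take $n=m-1$ moments, and then reconcile the resulting exponent $-(m-1)$ with the target $-(b(t)-1)$ — which is immediate when $\lambda e/4\le 1$, since then the right‑hand side already exceeds $1$, and follows from $m-1\ge b(t)-1$ when $\lambda e/4\ge 1$ — while tracking the constants carefully enough to land exactly on $2^{1/2}$ and $e^{1/(24(b(t)-1))}$. Since the proposition and its proof are taken essentially verbatim from~\cite{Che01}, this amounts to transcribing Chen's computation with the time scale renamed.
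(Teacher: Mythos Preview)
Your approach is essentially the same as the paper's: extend the supremum in the hypothesis to all of $\Z^d$ via the strong Markov property at the hitting time of $\supp f$, use the Khas'minskii factorial moment bound on each block, expand by the multinomial identity, iterate the Markov property across blocks, and finish with Markov's inequality and Stirling. The paper slices into $[b(t)]=\lfloor b(t)\rfloor$ blocks and takes $[b(t)]$ moments; you slice into $m=\lceil b(t)\rceil$ blocks and take $m-1$ moments. Your choice of ceiling is in fact tidier, since then each block has length $t/m\le t/b(t)$ and the hypothesis~\eqref{ass-Chen2} applies directly by monotonicity, whereas with the floor the block length $t/[b(t)]$ exceeds $t/b(t)$ and one must keep $a(t/[b(t)])$ through the computation (as the paper does in its final display).

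One small bookkeeping remark: bounding $(m-1)!$ and $\binom{2(m-1)}{m-1}$ separately as you wrote gives the prefactor $e^{1/(12(m-1))}$ rather than $e^{1/(24(m-1))}$. To land on the stated $e^{1/(24(b(t)-1))}$ you should instead apply Stirling directly to the ratio $(2(m-1))!/(m-1)!$ (as the paper does), which yields $\sqrt{2}\,(4(m-1)/e)^{m-1}e^{1/(24(m-1))}$, and then use $m-1\ge b(t)-1$. This is a trivial adjustment and does not affect the argument.
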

\begin{proof}
Note first that the supremum in~\eqref{ass-Chen2} can be extended to 
$x\in\Z^d$ by using the strong Markov property at the hitting time to
${\rm supp}f$. 
The key lemma is the following non-asymptotic bound for the moments of an 
additive functional. 
\begin{lemma}
\label{MR}
For any $m\in\N$, $t>0$ and a bounded function 
$f\colon \Z^d\to [0,\infty)$, 
\begin{equation}
\sup_{x\in\Z^d}E_x\left[\left(\int_0^tf(S_u){\rm d}u\right)^m\right]
\le m! \sup_{x\in\Z^d}E_x\left[\int_0^t f(S_u){\rm d}u\right]^m. 
\end{equation}
\end{lemma}
This goes back at least to~\cite{Kha59}, see the proof of Lemma~3 there.

In what follows, we write $[b(t)]$ for the integer part of $b(t)$
and 
\begin{equation}
I_k=\int_{\frac{(k-1)t}{[b(t)]}}^{\frac{kt}{[b(t)]}}f(S_u){\rm d}u
\end{equation} 
for $1\le k\le [b(t)]$. Then by the multinomial identity, 
\begin{equation}
\begin{split}
E_x\left[\left(\int_0^t f(S_u){\rm d}u\right)^{[b(t)]}\right]
&= E_x\left[\left(\sum_{k=1}^{[b(t)]}
\int_{\frac{(k-1)t}{[b(t)]}}^{\frac{kt}{[b(t)]}} f(S_u){\rm d}u
\right)^{[b(t)]}\right]\\
&\le \sum_{k_1+\cdots+k_{[b(t)]}=[b(t)]}
\frac{[b(t)]!}{k_1!\cdots k_{[b(t)]}!}
E_x\left[I_1^{k_1}\cdots I_{[b(t)]}^{k_{[b(t)]}}\right].
\label{expansion}
\end{split}
\end{equation}
Applying the Markov property at $\frac{kt}{[b(t)]}$ ($1\le k< [b(t)]$) and
using Lemma~\ref{MR}, we find
\begin{equation}
E_x\left[I_1^{k_1}\cdots I_{[b(t)]}^{k_{[b(t)]}}\right]
\le k_1!\cdots k_{[b(t)]}!\sup_{y\in\Z^d}
E_y\left[\int_0^{\frac{t}{[b(t)]}}f(S_u){\rm d}u\right]^{k_1+\cdots+k_{[b(t)]}}.
\end{equation}
Hence the right hand side of~\eqref{expansion} is bounded by
\begin{equation}
 [b(t)]!\binom{2[b(t)]}{[b(t)]}\sup_{y\in\Z^d}
E_y\left[\int_0^{\frac{t}{[b(t)]}}f(S_u){\rm d}u\right]^{[b(t)]}
\le \frac{(2[b(t)])!}{[b(t)]!}a\left(\frac{t}{[b(t)]}\right)^{[b(t)]}.
\end{equation}
Finally, the Markov inequality together with Stirling's approximation 
(see~\cite{Fel68}, Chapter II-9) yields
\begin{equation}
\begin{split}
&P_x\left(\int_0^tf(S_u){\rm d}u\ge 
\lambda a\left(\frac{t}{[b(t)]}\right)[b(t)]\right)\\
&\quad\le \left(\lambda a\left(\frac{t}{[b(t)]}\right)[b(t)]\right)^{-[b(t)]}
E_x\left[\left(\int_0^t f(S_u){\rm d}u\right)^{[b(t)]}\right]\\
&\quad\le \lambda^{-[b(t)]}[b(t)]^{-[b(t)]}
\frac{\sqrt{2\pi}(2[b(t)])^{2[b(t)]+\frac{1}{2}}
e^{-2[b(t)]+\frac{1}{24[b(t)]}}}
{\sqrt{2\pi}[b(t)]^{[b(t)]+\frac{1}{2}}
e^{-[b(t)]}}\\
&\quad= 2^{\frac12}e^{\frac{1}{24[b(t)]}}
\left(\frac{\lambda e}{4}\right)^{-[b(t)]}.
\end{split}
\end{equation}
\end{proof}
\section*{Acknowledgments}
\noindent
The first author thanks RIMS at Kyoto university for the kind hospitality 
and the financial support.
The second author was partially supported by JSPS KAKENHI Grant 
Number 24740055 and 16K05200. 
{The authors are grateful to a reviewer for pointing out a flaw in 
the proof of the lower bounds in Theorem~\ref{exp} and~\ref{alg}.}

\newcommand{\noop}[1]{}

\end{document}